\theoremstyle{plain}
\newtheorem{theorem}{Theorem}[section]
\newtheorem{lemma}[theorem]{Lemma}
\newtheorem{corollary}[theorem]{Corollary}
\theoremstyle{definition}
\newtheorem{example}[theorem]{Example}
\numberwithin{equation}{section}
\def\be{\begin{equation}}
\def\ee{\end{equation}}
\begin{document}

\title[The Rigidity and Gap Theorem for Liouville's Equation]
{The Rigidity and Gap Theorem\\ for Liouville's Equation}
\author{Weiming Shen}
\address{Beijing International Center for Mathematical Research\\
Peking University\\
Beijing, 100871, China}  \email{wmshen@pku.edu.cn}

\begin{abstract}
In this paper, we study the properties of the first global term
in the polyhomogeneous expansions for Liouville's equation.
We obtain rigidity and gap results for the boundary integral of the global coefficient.
We prove that such a boundary integral is always nonpositive,
and is zero if and only if the underlying domain is a disc.
More generally, we prove some gap theorems relating such a boundary integral
to the number of components of the boundary.
The conformal structure plays an essential role.
\end{abstract}

\thanks{
The author acknowledges the support of NSFC
Grant 11571019.}
\maketitle

\section{Introduction}\label{sec-Intro}

Assume $\Omega\subset \mathbb{R}^{2}$ is a domain. We consider the following problem:
\begin{align}
\label{eq-MainEq} \Delta{u}& =e^{ 2u } \quad\text{in }\Omega, \\
\label{eq-MainBoundary}u&=\infty\quad\text{on }\partial \Omega.
\end{align}
The equation \eqref{eq-MainEq} is known as Liouville's equation.
Geometrically,  $e^{2u}(dx\otimes dx+dy\otimes dy)$ is
a complete metric with constant Gauss curvature $-1$ on $\Omega$.
For a large class of domains $\Omega$, \eqref{eq-MainEq} and \eqref{eq-MainBoundary} admit a
solution $u\in C^\infty(\Omega)$. The higher dimensional counterpart
is the singular Yamabe problem. For a given $(\Omega,g)$, an
$n$-dimensional smooth compact Riemannian manifold with boundary, with $n\geq3$,
we consider
\begin{align}
\label{eq-MEq u} \Delta_{g} u -\frac{n-2}{4(n-1)}S_gu&= \frac14n(n-2) u^{\frac{n+2}{n-2}} \quad\text{in }\,\Omega,\\
\label{eq-MBoundary u}u&=\infty\quad\text{on }\partial \Omega,
\end{align}
where $S_g$ is the scalar curvature of $\Omega$. Then, $u^{\frac{4}{n-2}}g$ is
the complete metric with a constant scalar curvature $-n(n-1)$ on $\Omega$.
According to Loewner and Nirenberg \cite{Loewner&Nirenberg1974} for domains in the Euclidean space and
Aviles and McOwen \cite{AM1988DUKE} for the general case,
\eqref{eq-MEq u}-\eqref{eq-MBoundary u} admits a unique positive solution.

There are many works concerning the boundary behaviors of
\eqref{eq-MainEq}-\eqref{eq-MainBoundary} and \eqref{eq-MEq u}-\eqref{eq-MBoundary u}.
Loewner and Nirenberg \cite{Loewner&Nirenberg1974} studied asymptotic behaviors
of solutions of \eqref{eq-MEq u} and \eqref{eq-MBoundary u} for domains in Euclidean space and proved
an estimate involving leading terms. Kichenassamy
\cite{Kichenassamy2004JFA, Kichenassamy2005JFA} expanded further under the assumption
that $\Omega$ has a $C^{2,\alpha}$-boundary
by establishing Schauder
estimates for degenerate elliptic equations of Fuchsian type.
When $\Omega$ has a smooth boundary,
Andersson, Chru\'sciel and Friedrich \cite{ACF1982CMP} and Mazzeo \cite{Mazzeo1991} established an
estimate up to an arbitrary finite order.
In fact, they proved that solutions of \eqref{eq-MEq u} and \eqref{eq-MBoundary u}
are polyhomogeneous. In \cite{Graham2017}, Graham studied the volume renormalization for singular
Yamabe metrics and characterized the coefficient of the first
logarithmic term in the polyhomogeneous expansion
for the conformal factor of singular Yamabe metric  by a variation of the coefficient of the
first logarithmic term in the volume expansion.
When the boundary is singular, Han and the author \cite{HanShen,HanShen2} studied the asymptotic
behaviors of solutions of \eqref{eq-MainEq}-\eqref{eq-MainBoundary} and \eqref{eq-MEq u}-\eqref{eq-MBoundary u}
for domains in the Euclidean space, and proved that these solutions are well approximated by the
corresponding solutions in tangent cones near singular points on the boundary.

Other geometric problems with a similar feature include complete K\"ahler-Einstein metrics discussed by
Cheng and Yau \cite{ChengYau1980CPAM},
Lee and Melrose \cite{LeeMelrose1982},
the complete minimal graphs in the hyperbolic space by Han and Jiang \cite{HanJiang} and
Lin \cite{Lin1989Invent}, 
complete conformal metrics of negative Ricci curvature
by Gursky, Streets, and Warren \cite{M.Gursky1}, and
hyperbolic affine spheres by Jian and Wang \cite{JianWang2013JDG}.

In this paper,
we study the properties of the global term in the polyhomogeneous
expansions for Liouville's equation. In certain cases, we can obtain geometric
and topological properties of the underlying domain from its integral.
In particular, we obtain some rigidity and gap results for the boundary integral of the global coefficient.

Let $\Omega$ be a bounded domain in $\mathbb R^2$ and
$u\in C^2(\Omega)$ be a solution of \eqref{eq-MainEq}-\eqref{eq-MainBoundary}.
Set
$$v=e^{-u}.$$
Then, $v$ satisfies
\begin{align}\label{eq-MEq-v}
v\Delta v &= |\nabla v|^2-1\quad\text{in }\Omega, \\
\label{eq-MBoundary-v}v&=0\quad\text{on }\partial \Omega.
\end{align}
Suppose $\Omega$ is a bounded $C^{3,\alpha}$ domain, then near $\partial \Omega$, $v$ has an expansion given by
\begin{equation}\label{boundary expansion v-1}v(x)=d(x)-\frac{1}{2}\kappa(y) d^2(x)+c_3(y)d^3(x)+
O(d^{3+\alpha}(x)),\end{equation}
where $d(x)$ is the distance from $x$ to $\partial\Omega$
and $\kappa(y)$ is the curvature of $\partial\Omega$ at $y\in\partial\Omega$ with
$|y-x|=d(x)$.
In particular,
$|\nabla v|=1$ on $\partial \Omega$. Hence, $v$ is a defining function on $\Omega$.
In this sense, $v$ is a function more natural to study than $u$.
We will formulate our main results for $v$ instead of $u$.

We note that  ${c}_3$ in \eqref{boundary expansion v-1} is
the coefficient of the first global term which has
no explicit expressions in terms of local geometry of $\partial \Omega$ and it is not conformally invariant.
Moreover, if $\partial\Omega$ is smooth, then, $v$ can be expanded to an arbitrary finite order term:
$$v=d-\frac{1}{2}\kappa d^2+{c}_3 d^3+{c}_4d^4+....$$
The coefficients of $d^i$ for $i\geq4$ can be expressed in terms of ${c}_3$ and the curvature of $\partial \Omega$.

Our first result is the following theorem.

\begin{theorem}\label{u disc}
Let $\Omega$ be a bounded $C^{3,\alpha}$ domain in $\mathbb R^2$, for some
$\alpha\in(0,1)$, and $\partial \Omega=\bigcup_{i=1}^{k}{\sigma}_{i}$,
where ${\sigma}_i$ is a simple closed $C^{3,\alpha}$ curve, $i=1,...,k$.
Let $v$ be the solution of \eqref{eq-MEq-v}-\eqref{eq-MBoundary-v} in $\Omega$
and $c_3$ be the coefficient of the first global term for $v$.
Then, for any $i$, $$\int_{\sigma_i}c_3dl\leq 0.$$ Moreover, if for some $i$,
$$\int_{{\sigma}_i}c_3dl=0,$$then,
$k=1$ and $\Omega$ is a bounded disc.
\end{theorem}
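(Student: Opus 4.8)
The plan is to derive a global integral identity in which $\int_{\partial\Omega}c_3\,dl$ occurs as a boundary term paired with a bulk integrand that is nonnegative precisely because the underlying dimension is two, and then to pass from a single boundary curve to the simply connected situation by monotonicity of $v$ under domain inclusion. The pointwise starting point is
$$ v\,\Delta^2 v=2|\nabla^2 v|^2-(\Delta v)^2 , $$
which follows by applying $\Delta$ to \eqref{eq-MEq-v} and using the (flat) Bochner formula $\Delta|\nabla v|^2=2|\nabla^2 v|^2+2\nabla v\cdot\nabla\Delta v$, so the cross term cancels and $(\Delta v)^2+v\Delta^2 v=2|\nabla^2 v|^2$. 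In two variables one has the algebraic identity $2|\nabla^2 v|^2-(\Delta v)^2=(v_{xx}-v_{yy})^2+4v_{xy}^2\ge 0$, with equality at a point exactly when $\nabla^2 v=\tfrac12(\Delta v)\,\mathrm{Id}$ there.

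Integrating $\Delta^2 v=\mathrm{div}(\nabla\Delta v)$ over $\Omega_\varepsilon=\{x\in\Omega:d(x)>\varepsilon\}$, a domain with $C^{2,\alpha}$ boundary $\{d=\varepsilon\}$ for small $\varepsilon$, gives $\int_{\Omega_\varepsilon}\tfrac{(v_{xx}-v_{yy})^2+4v_{xy}^2}{v}\,dx\,dy=\int_{\partial\Omega_\varepsilon}\partial_\nu(\Delta v)\,ds$ with $\nu$ the outward unit normal. From \eqref{boundary expansion v-1} together with \eqref{eq-MEq-v} one computes, along the inward normal, $\Delta v=-2\kappa+6c_3\,d+o(d)$ (the tangential part of $\nabla v$ being $O(d^2)$), so $\Delta v$ extends to $\overline\Omega$ with $\partial_\nu(\Delta v)|_{\partial\Omega}=-6c_3$, and the right side converges to $-6\int_{\partial\Omega}c_3\,dl$ as $\varepsilon\to 0$; on the left the numerator is $O(d^2)$ while $v\sim d$, so the integrand extends continuously to $\overline\Omega$ and the integral converges monotonically. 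This yields
$$ \int_\Omega\frac{(v_{xx}-v_{yy})^2+4v_{xy}^2}{v}\,dx\,dy=-6\int_{\partial\Omega}c_3\,dl . $$
Hence, for any bounded $C^{3,\alpha}$ domain, $\int_{\partial\Omega}c_3\,dl\le 0$; equality forces $v_{xy}\equiv 0$ and $v_{xx}\equiv v_{yy}$, so $v=f(x)+g(y)$ with $f''\equiv g''\equiv\text{const}$, i.e. $v$ is a quadratic polynomial, and since $\Omega=\{v>0\}$ is bounded with $|\nabla v|=1$ on $\partial\Omega$ this must be $v=\dfrac{R^2-|x-x_0|^2}{2R}$, so $\Omega=B_R(x_0)$. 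In particular the theorem holds when $k=1$.

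For $k\ge 2$ fix a component $\sigma_i$ and use the comparison principle $v_{\Omega'}\le v_{\Omega''}$ for hyperbolic domains $\Omega'\subseteq\Omega''$ (monotonicity of the hyperbolic metric, i.e. Schwarz--Pick/Ahlfors). If $\sigma_i$ is the outer component, let $D_i$ be the $C^{3,\alpha}$ Jordan domain it bounds; then $\Omega\subseteq D_i$, and since the $d$- and $d^2$-coefficients of the expansions of $v_\Omega$ and $v_{D_i}$ along $\sigma_i$ agree (they depend only on $\sigma_i$), $v_\Omega\le v_{D_i}$ forces $c_3^\Omega\le c_3^{D_i}$ pointwise on $\sigma_i$, whence $\int_{\sigma_i}c_3^\Omega\le\int_{\partial D_i}c_3^{D_i}\le 0$ by the previous paragraph. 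If $\sigma_i$ instead bounds a hole $H_i$, set $\widetilde\Omega_i=\mathbb{R}^2\setminus\overline{H_i}$, so $\Omega\subseteq\widetilde\Omega_i$; the same integral identity holds for $\widetilde\Omega_i$ (near the cusp at infinity $v\sim|x|\log|x|$, so the boundary term on $\{|x|=R\}$ and the tail of the bulk integral vanish as $R\to\infty$), and therefore
$$ \int_{\sigma_i}c_3^\Omega\le\int_{\sigma_i}c_3^{\widetilde\Omega_i}=-\frac16\int_{\widetilde\Omega_i}\frac{(v_{xx}-v_{yy})^2+4v_{xy}^2}{v}\,dx\,dy<0 , $$
strictly, since equality would make $v_{\widetilde\Omega_i}$ a quadratic polynomial and $\widetilde\Omega_i$ the exterior of a disc, whereas the solution on the exterior of a disc is $|x-x_0|\log(|x-x_0|/R)$. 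Thus $\int_{\sigma_i}c_3\,dl\le 0$ in all cases.

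Finally, suppose $\int_{\sigma_i}c_3\,dl=0$. By the last estimate $\sigma_i$ cannot bound a hole, so it is the outer component; then $0=\int_{\sigma_i}c_3^\Omega\le\int_{\partial D_i}c_3^{D_i}\le 0$, so $\int_{\partial D_i}c_3^{D_i}=0$ and $D_i$ is a round disc, while equality of the integrals of $c_3^\Omega\le c_3^{D_i}$ forces $c_3^\Omega=c_3^{D_i}$ on $\sigma_i$. Since $\sigma_i=\partial D_i$ is now a circle, the full expansions of $v_\Omega$ and $v_{D_i}$ at $\sigma_i$ agree (all higher coefficients being determined by $\kappa$ and $c_3$), so $v_\Omega$ and $v_{D_i}$ agree to infinite order along $\sigma_i$; unique continuation for the degenerate elliptic equation \eqref{eq-MEq-v} then gives $v_\Omega\equiv v_{D_i}$ on $\Omega$. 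As $v_{D_i}>0$ on the interior of $D_i$ but $v_\Omega\to 0$ along any further boundary curve of $\Omega$, there is none, so $\Omega=D_i$ is a disc and $k=1$. The steps that will require the most care are the divergence-theorem argument up to $\partial\Omega$ together with the passage to the limit $\varepsilon\to 0$, the extension of the integral identity to the noncompact model domains $\widetilde\Omega_i$, and the concluding unique-continuation step.
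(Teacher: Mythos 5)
Your integral identity and its consequences for a single Jordan curve are exactly the paper's Lemma \ref{lemma-integral-c3} and the first half of its proof of Theorem \ref{u disc}, and your treatment of the inner curves (comparison with a solution on $\mathbb{R}^2\setminus\overline{H_i}$) is in the same spirit as the paper's inversion argument. Two points, however, need attention. First, on unbounded domains the problem \eqref{eq-MEq-v}--\eqref{eq-MBoundary-v} does \emph{not} have a unique solution: the paper's Example \ref{exa-disc-complement2} exhibits both $\tfrac12(|z|^2-1)$ and $|z|\ln|z|$ on $\mathbb{R}^2\setminus\overline{B_1(0)}$, so your sentence ``the solution on the exterior of a disc is $|x-x_0|\log(|x-x_0|/R)$'' is false as stated. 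Your argument survives only because you have implicitly fixed the solution with $v\sim|z|\ln|z|$ at infinity; you must say so, verify that the Schwarz--Pick comparison $v_\Omega\le v_{\widetilde\Omega_i}$ holds for \emph{that} solution, and actually establish the decay of $\int_{|z|=R}\partial_\nu(\Delta v)\,dl$ and the integrability of the bulk term for it (the paper instead inverts to a bounded domain, where the quadratic equality case does occur, and then disposes of it by an explicit annulus comparison, Example \ref{exa-annular}).

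The genuine gap is the last step. Having shown $D_i=B_R(x_0)$ and $c_3^\Omega=c_3^{D_i}$ pointwise on $\sigma_i$, you conclude $v_\Omega\equiv v_{D_i}$ by ``unique continuation for the degenerate elliptic equation.'' This is not a standard citation: the boundary $\{v=0\}$ is exactly where \eqref{eq-MEq-v} degenerates (a Fuchsian-type boundary), and the assertion that two solutions agreeing to infinite order along such a boundary must coincide requires a Carleman-type or convergent-series argument that you neither prove nor reference; moreover you have only matched the coefficients through order $d^3$ and are appealing to an unproved claim that all higher coefficients are determined by $\kappa$ and $c_3$ for a merely $C^{3,\alpha}$ domain. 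The paper closes this case without any unique continuation: if $\Omega\subsetneq B_1(0)$, pick $z_0\in B_1(0)\setminus\Omega$ and compare $v_\Omega$ with the explicit solution $-|z-z_0|\ln\bigl|\tfrac{-z+z_0}{1-\bar z_0 z}\bigr|$ on $B_1(0)\setminus\{z_0\}$ (Example \ref{exa-disc-minus-point}), whose boundary integral of $c_3$ is strictly negative; this immediately contradicts $\int_{\sigma_i}c_3^\Omega\,dl=0$. You should replace your unique-continuation step by this (or an equivalent) explicit comparison.
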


We now discuss whether we can obtain more information from $\int_{\sigma_i}c_3dl$. To this end,
we consider the normalized integrals $ \int_{\sigma_i}dl \int_{\sigma_i}c_3dl$ and
$ \int_{\partial \Omega}dl \int_{\partial \Omega}c_3dl$, which are invariant under rescaling.
We note that, for bounded convex domains, $-\int_{\sigma_i}dl \int_{\sigma_i}c_3dl$ can be arbitrarily large.
(See Example \ref{global term convex domain}.)
Next, we demonstrate that, in a multiply connected domain,
$ \int_{\sigma_i}dl \int_{\sigma_i}c_3dl$ indeed has a uniformly negative upper bound. We have the following gap theorem.

\begin{theorem}\label{not Simply connected domains}
Let $\Omega$ be a $k$-connected bounded $C^{3,\alpha}$ domain in $\mathbb R^2$, for $\alpha\in(0,1)$
and $k\geq2$, and  $\partial \Omega=\bigcup_{i=1}^{k}\sigma_{i}$, where $\sigma_i$
is a simple closed $C^{3,\alpha}$ curve, $i=1,...,k$.
Let $v$ be the solution of \eqref{eq-MEq-v}-\eqref{eq-MBoundary-v} in $\Omega$
and $c_3$ be the coefficient of the first global term for $v$.
Then, for each $i$,
\begin{equation}\label{gap for muti conn dom}\int_{\sigma_i}dl\int_{\sigma_i}c_3dl<-\frac{2\pi^2}{3}.\end{equation}
\end{theorem}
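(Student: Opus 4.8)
The plan is to reduce the inequality to the case of a bounded doubly connected domain by monotonicity of the hyperbolic metric, and then to evaluate the normalized integral explicitly by uniformizing that domain onto a round annulus. Given the $k$-connected $\Omega$ with $k\ge 2$ and a component $\sigma_i$ of $\partial\Omega$, one first constructs a bounded doubly connected $C^{3,\alpha}$ domain $\widetilde\Omega$ with $\Omega\subseteq\widetilde\Omega$ and $\sigma_i\subseteq\partial\widetilde\Omega$: if $\sigma_i$ is the outer component of $\partial\Omega$, fill in all holes of $\Omega$ but one; if $\sigma_i$ bounds a hole, keep that hole and fill in all the others (the unbounded complementary component of $\Omega$ still contributes a bounded outer boundary). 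Let $\widetilde v$ solve \eqref{eq-MEq-v}--\eqref{eq-MBoundary-v} on $\widetilde\Omega$. Since $\Omega\subseteq\widetilde\Omega$, the Ahlfors--Schwarz lemma (equivalently, the comparison principle for \eqref{eq-MEq-v}) gives $v\le\widetilde v$ on $\Omega$. Near $\sigma_i$ both functions obey \eqref{boundary expansion v-1} with the \emph{same} $d$ and the \emph{same} $\kappa$, so $\widetilde v-v=(\widetilde c_3-c_3)\,d^3+O(d^{3+\alpha})\ge 0$ forces $\widetilde c_3\ge c_3$ pointwise on $\sigma_i$; as $\sigma_i$ has the same length in $\Omega$ and in $\widetilde\Omega$, it suffices to prove \eqref{gap for muti conn dom} for bounded doubly connected domains.

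So let $D$ be bounded and doubly connected and $\gamma$ a boundary component. Choose a conformal equivalence $\Phi\colon A\to D$ from the round annulus $A=\{r<|w|<1\}$ with $\Phi(\{|w|=1\})=\gamma$, where $r\in(0,1)$ and $h:=\log(1/r)\in(0,\infty)$ are fixed by the modulus of $D$, and $\Phi$ is $C^{3,\alpha}$ up to $\{|w|=1\}$ with $\Phi'\ne 0$ there. Starting from $v_D\circ\Phi=v_A\,|\Phi'|$, where $v_A(w)=\tfrac{h}{\pi}\,|w|\sin\!\big(\tfrac{\pi}{h}\log\tfrac1{|w|}\big)$ is the explicit radial solution on $A$, I would expand $v_A$ near $\{|w|=1\}$, Taylor-expand $|\Phi'|$, and convert the radial parameter $1-|w|$ into the Euclidean distance $d$ in $D$; the outcome is
\[
c_3\big(\Phi(e^{i\theta})\big)=\frac{1}{|\Phi'(e^{i\theta})|^{2}}\Big(\tfrac13\,\mathrm{Re}\,\big(e^{2i\theta}S_\Phi(e^{i\theta})\big)-\tfrac16-\tfrac{\pi^{2}}{6h^{2}}\Big),
\]
where $S_\Phi$ is the Schwarzian derivative. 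Since $dl=|\Phi'(e^{i\theta})|\,d\theta$ on $\gamma$, this yields
\[
\int_\gamma dl\int_\gamma c_3\,dl=\frac13\Big(\!\int_0^{2\pi}\!\!|\Phi'|\,d\theta\Big)\!\Big(\!\int_0^{2\pi}\!\!\frac{\mathrm{Re}(e^{2i\theta}S_\Phi)}{|\Phi'|}\,d\theta\Big)-\Big(\tfrac16+\tfrac{\pi^{2}}{6h^{2}}\Big)\!\Big(\!\int_0^{2\pi}\!\!|\Phi'|\,d\theta\Big)\!\Big(\!\int_0^{2\pi}\!\!\frac{d\theta}{|\Phi'|}\Big).
\]

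By Cauchy--Schwarz the last product is $\ge(2\pi)^{2}$, so the second term is $\le-\tfrac{2\pi^{2}}{3}-\tfrac{2\pi^{4}}{3h^{2}}<-\tfrac{2\pi^{2}}{3}$; the first term is $\le 0$ provided $\int_0^{2\pi}|\Phi'(e^{i\theta})|^{-1}\mathrm{Re}\big(e^{2i\theta}S_\Phi(e^{i\theta})\big)\,d\theta\le 0$, and the other boundary component of $D$ is handled by replacing $\Phi$ with $\Phi\circ(w\mapsto r/w)$. Thus the main obstacle is exactly this last Schwarzian inequality for the uniformizing map of the annulus. It is the doubly connected analogue of the inequality behind Theorem \ref{u disc}: running the same expansion with the disc model shows that for a Riemann map $f\colon\mathbb D\to\Omega$ of a simply connected domain one has $\int_{\partial\Omega}c_3\,dl=\tfrac13\int_0^{2\pi}|f'(e^{i\theta})|^{-1}\mathrm{Re}\big(e^{2i\theta}S_f(e^{i\theta})\big)\,d\theta$, so Theorem \ref{u disc} asserts precisely that this integral is $\le 0$. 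I expect the annulus version to follow by adapting that proof, the multiple connectivity being the only genuinely new feature; the boundary expansions, the change of parameter, the Cauchy--Schwarz step and the monotonicity reduction are all routine. Finally, the constant $\tfrac{2\pi^{2}}{3}$ comes out sharp: it is approached as $D$ degenerates (modulus $h\to\infty$) but never attained, which is exactly why \eqref{gap for muti conn dom} is strict.
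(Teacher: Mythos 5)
Your reduction of the $k$-connected case to a doubly connected domain containing $\Omega$ and sharing the boundary component $\sigma_i$ is exactly the paper's first step (it takes the domain enclosed by $\sigma_1$ and $\sigma_i$ and applies the comparison principle \eqref{xaximum princ v}--\eqref{xaximum princ c3}), and your explicit value of $c_3$ on a round annulus, $-\tfrac16-\tfrac{\pi^2}{6h^2}$ for $\Phi=\mathrm{id}$, is consistent with Examples \ref{exa-annular} and \ref{optimal for 2 connected doamins}. The problem is the step you yourself flag as ``the main obstacle'': the inequality
\[
\int_0^{2\pi}\frac{\mathrm{Re}\big(e^{2i\theta}\,\mathrm{S}(\Phi)(e^{i\theta})\big)}{|\Phi'(e^{i\theta})|}\,d\theta\le 0
\]
for a uniformizing map $\Phi$ of a finite-modulus annulus is not proved, and it does not follow ``by adapting'' the disc case. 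In the disc case the analogous boundary quantity equals $-\tfrac13\int_{B_1(0)}(1-|z|^{2})|\mathrm{S}(f)|^{2}|f_z|^{-1}\,dx\wedge dy\le 0$ precisely because $f$ is holomorphic on the whole disc: integrating by parts against the weight $1-|z|^2$ leaves no interior boundary term, and the subharmonicity $\Delta\Delta(1/|f_z|)=4|\mathrm{S}(f)|^2/|f_z|\ge0$ can be combined with mean-value inequalities centered at the origin. On an annulus the same integration by parts produces terms on the inner circle with no evident sign, and $1/|\Phi'|$ is not defined on any disc, so none of that machinery applies directly. This is the actual analytic content of the theorem, and leaving it as an expected adaptation is a genuine gap. (It is also not clear your inequality is even the right target: since your second term is already $\le-\tfrac{2\pi^2}{3}-\tfrac{2\pi^4}{3h^2}$, the theorem could hold with the Schwarzian term slightly positive, so you may be reducing to a statement strictly stronger than \eqref{gap for muti conn dom}.)

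The paper sidesteps the finite-modulus annulus entirely. For the outer component it compares $v$ with $v^{+}=(|z|\ln\tfrac1{|z|})|f_z|$, the solution pulled back from the \emph{punctured} disc by a Riemann map $f$ of the simply connected domain bounded by $\sigma_1$ (legitimate since $\Omega$ omits an interior point); for the inner component it uses the exterior model $|z|\ln|z|$ on $(B_1(0))^{c}$. In both degenerate models $f$ is holomorphic on a simply connected domain, so the mean value property at the origin (respectively the normalization at infinity) together with $\Delta\Delta(1/|f_z|)\ge0$ yields every sign needed in the analogue of \eqref{global term under transform}, and the borderline cases are disposed of by a second comparison with an explicit annulus. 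To complete your route you would have to actually prove your Schwarzian boundary inequality for annulus uniformizers; absent that, the degenerate-model comparison is the working argument.
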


We point out that $ \int_{\sigma_i}dl \int_{\sigma_i}c_3dl$
is {\it not} conformally invariant and that the upper bound $-{2\pi^2}/{3}$ in the right-hand side
of \eqref{gap for muti conn dom} is optimal in multiply connected domains.
See Example \ref{optimal for k connected doamins}.

As consequences, we have the following
rigidity results.

\begin{theorem}\label{Simply connected domains rigidity}
Let $\Omega$ be a bounded $C^{3,\alpha}$ domain in $\mathbb R^2$, for some $\alpha\in(0,1)$,
and $\partial \Omega=\bigcup_{i=1}^{k}\sigma_{i}$, where $\sigma_i$ is a simple closed $C^{3,\alpha}$ curve, $i=1,...,k$.
Let $v$ be the solution of \eqref{eq-MEq-v}-\eqref{eq-MBoundary-v} in $\Omega$
and $c_3$ be the coefficient of the first global term for $v$.
Suppose that there exists some $i$ such that
$$\int_{\sigma_i}dl\int_{\sigma_i}c_3dl\geq-\frac{2\pi^2}{3}.$$
Then,
$k=1$ and $\Omega$ is a simply connected domain.
\end{theorem}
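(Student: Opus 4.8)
The plan is to obtain Theorem~\ref{Simply connected domains rigidity} as a direct consequence of the gap theorem, Theorem~\ref{not Simply connected domains}, together with a standard fact from plane topology; no new analysis is required.

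First I would show that the hypothesis forces $k=1$. Suppose, to the contrary, that $k\geq 2$. Then $\Omega$ is a $k$-connected bounded $C^{3,\alpha}$ domain with $k\geq 2$ and boundary $\partial\Omega=\bigcup_{j=1}^{k}\sigma_j$, so Theorem~\ref{not Simply connected domains} applies and yields, for \emph{every} $j\in\{1,\dots,k\}$, the strict inequality $\int_{\sigma_j}dl\int_{\sigma_j}c_3\,dl<-2\pi^2/3$. Applied to the particular index $i$ supplied by the hypothesis, this contradicts $\int_{\sigma_i}dl\int_{\sigma_i}c_3\,dl\geq-2\pi^2/3$. Hence $k=1$. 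I want to stress that this is precisely where the \emph{uniformity in $j$} of the conclusion of Theorem~\ref{not Simply connected domains} is used: the hypothesis here controls only one boundary component, but that suffices exactly because the gap estimate holds for all components once $k\geq 2$.

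It then remains to verify that $k=1$ implies $\Omega$ is simply connected. Since $\partial\Omega=\sigma_1$ is a single simple closed $C^{3,\alpha}$ (hence Jordan) curve, the Jordan curve theorem writes $\mathbb R^2\setminus\sigma_1$ as the disjoint union of a bounded component and an unbounded one; as $\Omega$ is a bounded connected open set whose boundary is exactly $\sigma_1$, it must coincide with the bounded component, i.e.\ with the interior of the Jordan curve $\sigma_1$. By the Jordan--Schoenflies theorem this region is homeomorphic to an open disc, hence simply connected, which completes the argument.

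The only genuine difficulty was already absorbed into the proof of Theorem~\ref{not Simply connected domains}; here one simply has to track the quantifiers correctly and invoke the right topological identification. The result is sharp in the sense that no further rigidity can be expected: when $k=1$ one cannot conclude that $\Omega$ is a disc, and indeed Example~\ref{global term convex domain} exhibits (convex, hence simply connected) domains for which $-\int_{\sigma_1}dl\int_{\sigma_1}c_3\,dl$ is arbitrarily large, so the normalized boundary integral is far from rigid within the simply connected class.
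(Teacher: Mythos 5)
Your argument is correct and is exactly the paper's: the paper states that Theorem~\ref{Simply connected domains rigidity} is a direct consequence of Theorem~\ref{not Simply connected domains}, which is precisely the contrapositive you carry out (if $k\geq2$ the gap estimate holds on every boundary component, contradicting the hypothesis on the component $\sigma_i$). Your additional verification that $k=1$ forces simple connectivity, and the remark on sharpness via Example~\ref{global term convex domain}, are consistent with the paper and add nothing that conflicts with it.
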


\begin{theorem}\label{k-connected domains rigidity}
Let $\Omega$ be a bounded $k$-connected $C^{3,\alpha}$ domain in $\mathbb R^2$,
for $\alpha\in(0,1)$
and $k\geq1$.
Let $v$ be the solution of \eqref{eq-MEq-v}-\eqref{eq-MBoundary-v} in $\Omega$
and $c_3$ be the coefficient of the first global term for $v$.
Suppose that there exists a positive integer $l\geq2$ such that
$$\frac{3}{2\pi^2}\int_{\partial \Omega}dl\int_{\partial \Omega}c_3dl\geq-l^2.$$
Then,
$k<l$.
\end{theorem}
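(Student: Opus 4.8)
The plan is to reduce the statement to the ``pointwise'' gap estimate of Theorem \ref{not Simply connected domains} together with a one-line Cauchy--Schwarz (equivalently arithmetic--harmonic mean) argument, after peeling off the simply connected case. Write $L_i=\int_{\sigma_i}dl$ for the length of the boundary component $\sigma_i$ and $C_i=\int_{\sigma_i}c_3\,dl$, so that $\int_{\partial\Omega}dl=\sum_{i=1}^{k}L_i$ and $\int_{\partial\Omega}c_3\,dl=\sum_{i=1}^{k}C_i$, with each $L_i>0$. If $k=1$ there is nothing to prove, since $l\ge 2>1=k$. So I would assume $k\ge 2$ and argue by contradiction, supposing $k\ge l$.

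Since $k\ge 2$, Theorem \ref{not Simply connected domains} applies to every component and gives $L_iC_i<-\tfrac{2\pi^2}{3}$ for each $i=1,\dots,k$; as $L_i>0$ this also forces $C_i<0$. Rewriting, $-C_i>\tfrac{2\pi^2}{3}\cdot\tfrac1{L_i}$ for each $i$, and summing over $i$ yields $-\sum_{i=1}^{k}C_i>\tfrac{2\pi^2}{3}\sum_{i=1}^{k}\tfrac1{L_i}$. Multiplying by $\sum_{i=1}^{k}L_i>0$ and invoking $\bigl(\sum_{i=1}^{k}L_i\bigr)\bigl(\sum_{i=1}^{k}\tfrac1{L_i}\bigr)\ge k^2$, I obtain
\[
-\Bigl(\int_{\partial\Omega}dl\Bigr)\Bigl(\int_{\partial\Omega}c_3\,dl\Bigr)
=\Bigl(\sum_{i=1}^{k}L_i\Bigr)\Bigl(-\sum_{i=1}^{k}C_i\Bigr)
>\frac{2\pi^2}{3}\,k^2\ge\frac{2\pi^2}{3}\,l^2 .
\]
Hence $\tfrac{3}{2\pi^2}\int_{\partial\Omega}dl\int_{\partial\Omega}c_3\,dl<-l^2$, contradicting the hypothesis. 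Therefore $k<l$, as claimed.

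The conceptual point is that the component-wise gap $\int_{\sigma_i}dl\int_{\sigma_i}c_3\,dl<-\tfrac{2\pi^2}{3}$, though not additive in $i$, upgrades to the global statement precisely because the harmonic mean of the $L_i$ does not exceed their arithmetic mean; the factor $k^2$ (not merely $k$) produced this way is exactly what matches the $-l^2$ on the right-hand side, which also explains why the normalization by $\int_{\partial\Omega}dl$ is the natural one. I do not expect a real obstacle: the only care needed is to separate the $k=1$ case (immediate from $l\ge 2$) because Theorem \ref{not Simply connected domains} requires $k\ge 2$, and to note that each $L_i>0$ so that the division and the mean inequality are legitimate. No further input from Theorem \ref{u disc} is needed beyond what is already incorporated in Theorem \ref{not Simply connected domains}.
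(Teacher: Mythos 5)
Your proposal is correct and follows essentially the same route as the paper: both apply the componentwise gap $\int_{\sigma_i}dl\int_{\sigma_i}c_3\,dl<-\tfrac{2\pi^2}{3}$ from Theorem \ref{not Simply connected domains} to each boundary curve and then combine via the arithmetic--harmonic mean inequality $\bigl(\sum_i L_i\bigr)\bigl(\sum_i L_i^{-1}\bigr)\ge k^2$ to get $\int_{\partial\Omega}dl\int_{\partial\Omega}c_3\,dl<-\tfrac{2k^2\pi^2}{3}$ for $k\ge 2$. The only cosmetic differences are that you phrase the conclusion as a contradiction and explicitly dispose of the trivial case $k=1$, which the paper leaves implicit.
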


One of the widely studied global quantities in conformal geometry is the renormalized volume
for the conformally compact Einstein metrics \cite{Graham1999}, and also for the singular Yamabe metrics \cite{Graham2017}.
The renormalized volume is conformally invariant for the conformally compact Einstein metrics,
but in general is not conformally invariant for the singular Yamabe metrics.
When $M$ is a $4$ dimensional conformally compact Einstein manifold,
the renormalized volume for $M$ can be expressed
as a linear combination of the integral of the $L^2$-norm of Weyl tensor and the  Euler number of $M$.
Relying on the conformal compactification, simple doubling and the work of Chang, Gursky and Yang
\cite{ChangGurskyYang2002, ChangGurskyYang2003} on a closed 4-manifold
with positive scalar curvature and large integral of $\sigma_2$ relating to the Euler number,
Chang, Qing and Yang \cite{ChangQingYang2004} obtained some topological information
from the renormalized volume of $4$ dimensional conformally compact Einstein manifold.
Recently, using blowup methods,
Li, Qing and Shi \cite{LiQingShi2017} obtained a gap theorem for a class
of $4$ dimensional conformally compact Einstein manifolds with very large renormalized volumes
under the condition that the first nonlocal term in the expansions of the metric near boundary is $0$.
This improves the work of Chang, Qing and Yang in \cite{ChangQingYang2007}.
With this gap theorem, Li, Qing and Shi \cite{LiQingShi2017}  obtained the rigidity of hyperbolic space.

In our case, the normalized integral of the coefficient of the first global term is
{\it not} conformally invariant, which causes major difficulties.
Our strategy is to connect directly the {\it boundary} integral
of the coefficient of the first global term with a {\it domain} integral of
a combination of lower order derivatives of $v$.
Then, based on a combination of conformal transforms and the maximum principle, we can
compare the integral of the coefficient of the first global term for the solution
in the underlying domain with the corresponding integral in certain model domains.

The paper is organized as follows. In Section \ref{sec-solutions and formulas},
we provide solutions
of \eqref{eq-MainEq}-\eqref{eq-MainBoundary} in some model domains
and derive an important formula for the integral of the coefficient of the first global term.
In Section \ref{sec-disc}, we study the sign of such an integral
on each boundary curve and prove Theorem \ref{u disc}.
In Section \ref{sec-Simply connected domains}, we study such an integral
in multiply connected domains and prove Theorem \ref{not Simply connected domains}.

The author would like to thank Qing Han
for introducing the problem studied in this paper and for his persistent encouragement.
The author would also like to thank Jie Qing and Gang Tian for helpful discussions.

\section{Important Solutions and Formulas}\label{sec-solutions and formulas}

In this section,
we will derive solutions of \eqref{eq-MainEq}-\eqref{eq-MainBoundary} in some model domains
and derive an important formula for the boundary integral of the first global term for $v$.

Throughout this paper, we will adopt notations from complex analysis and denote
 by $z = (x,y)$ points in the plane.

First,
we collect two well-known results concerning
solutions of \eqref{eq-MainEq}-\eqref{eq-MainBoundary}.

Let $\Omega_1$ and $ \Omega_2$ be two bounded domains in $\mathbb R^2$
and let $u_1$  and $u_2$
be the solutions of \eqref{eq-MainEq}-\eqref{eq-MainBoundary}
in $\Omega_{1}$ and $\Omega_{2}$, respectively. If $\Omega_1\subseteq\Omega_2$,
by the maximum principle, we have $u_1\geq u_2$ in $\Omega_1$.
Hence,
\begin{equation}\label{xaximum princ v}v_1=e^{-u_1}\leq e^{-u_2}=v_2\quad\text{in }\Omega_1.\end{equation}
Suppose $\sigma$ is a closed simple $C^{3,\alpha}$ curve,
$\sigma\subseteq\partial \Omega_1\bigcap \partial\Omega_2$.
Let $c_3^{1}$ and $c_3^2$ be the corresponding first global terms for $v_{1}$ and $v_2$ on $\sigma$,
respectively. By \eqref{xaximum princ v}, we have
\begin{equation}\label{xaximum princ c3}c_3^1\leq c_3^2\quad\text{on }\sigma.\end{equation}

Next, let $ \Omega_1$ and $\Omega_2$ be two bounded domains in $\mathbb R^2$.
Suppose $v_2\in C^{2}(\Omega_2)$ is a solution of \eqref{eq-MEq-v}-\eqref{eq-MBoundary-v}
in $\Omega_2$
and $w=x'+iy'=f(z)$ is a one-to-one holomorphic function
from $\Omega_1$ onto $\Omega_2$. Then,
\begin{equation}\label{slu under conf trans}v_1(z)=\frac{v_2(f(z))}{|f_{z}(z)|}\end{equation}
 is a solution of \eqref{eq-MEq-v} in $\Omega_1$.
In fact, we first note that $g_2=v_{2}^{-2}(dx\otimes dx+dy\otimes dy)$ is
a metric with constant Gauss curvature $-1$ on $\Omega_{2}$
and the Gauss curvature of the pull-back metric remains the same
under the conformal mapping.
Then, $g_1=f^{*}g_2=v_{1}^{-2}(dx\otimes dx+dy\otimes dy)$ is
a metric with constant Gauss curvature $-1$ on $\Omega_{1}$. Hence, $v_1$ solves \eqref{eq-MEq-v}.
\smallskip

We now give some important
solutions of \eqref{eq-MEq-v}-\eqref{eq-MBoundary-v}.

\begin{example}\label{exa-disc}
Consider $\Omega=B_R(0)$, for some $R>0$. Denote by $v_{R}$ the corresponding solution of
\eqref{eq-MEq-v}-\eqref{eq-MBoundary-v}. Then,
\begin{equation}\label{eq-solution-inside}v_{R}(z)=\frac{R^2-|z|^2}{2R}.\end{equation}
With $d(z)=R-|z|$, we have
$$v_{R}=d-\frac{d^2}{2R}.$$
\end{example}

\begin{example}\label{exa-disc-complement} Consider $\Omega=\mathbb R^2\setminus\overline{ B_R(0)}$ and let
\begin{equation}\label{eq-solution-outside}v_{R-}(z)=\frac{|z|^2-R^2}{2R}.\end{equation}
Then, $v_{R-}$ is a solution of
\eqref{eq-MEq-v}-\eqref{eq-MBoundary-v} for $\Omega$.
With $d(z)=|z|-R$, we have
$$v_{R-}=d+\frac{d^2}{2R}.$$
\end{example}

\begin{example}\label{exa-annular}
Consider $\Omega=B_{\frac{1}{R}}(0)\backslash \overline{B_{R}(0)}$, for some $R\in (0,1)$.
Let $v_{R,\frac{1}{R}}$ be the solution of \eqref{eq-MEq-v}-\eqref{eq-MBoundary-v} in $\Omega$.
Set $v_{R,\frac{1}{R}}=|z|h$ and $|z|=e^{-t}$. Then, $h=h(t)$ satisfies
\begin{align}
&hh_{tt}=h_{t}^{2}-1\quad\text{in }(-\ln\frac{1}{R},\ln\frac{1}{R}),\\
&h(\ln\frac{1}{R})=h(-\ln\frac{1}{R})=0.
\end{align}
In fact, $h$ is given by
$$h=\frac{2}{\pi}\ln\frac{1}{R}\cos\frac{\pi t}{2\ln\frac{1}{R}}.$$
Therefore, the solution of \eqref{eq-MEq-v}-\eqref{eq-MBoundary-v} for
$\Omega=B_{\frac{1}{R}}(0)\backslash \overline{B_{R}(0)}$ is given by
$$v_{R,\frac{1}{R}}=|z|\frac{2}{\pi}\ln\frac{1}{R}\sin\frac{\ln\frac{|z|}{R}}{\frac{2}{\pi}\ln \frac{1}{R}}.$$
Near $\partial B_R(0)$, $d(z)= |z|-R$ and
$$v_{R,\frac{1}{R}}=d+\frac{1}{2R}d^2+\bigg(-\frac{1}{6R^2}-\frac{1}{6(\frac{2}{\pi}R\ln\frac{1}{R})^2}\bigg)d^3+O(d^4).$$
Near $\partial B_{\frac{1}{R}}(0)$, $d(z)= \frac{1}{R}-|z|$ and
$$v_{R,\frac{1}{R}}=d-\frac{R}{2}d^2+\bigg(-\frac{R^2}{6}-\frac{R^2}{6(\frac{2}{\pi}\ln\frac{1}{R})^2}\bigg)d^3+O(d^4).$$
\end{example}

\begin{example}\label{exa-disc-minus-origin}
Consider $\Omega=B_{1}(0)\backslash\overline{ B_{R}(0)}$, for some $R\in (0,1)$.
By a rescaling in Example \ref{exa-annular}, the solution of \eqref{eq-MEq-v}-\eqref{eq-MBoundary-v}
for $\Omega=B_{1}(0)\backslash \overline{B_{R}(0)}$ is given by
$$v_{R,1}=|z|\frac{1}{\pi}\ln\frac{1}{R}\sin\frac{\ln\frac{1}{|z|}}{\frac{1}{\pi}\ln \frac{1}{R}}.$$
Let $R\rightarrow0$, $v_{R,1}\rightarrow|z|\ln\frac{1}{|z|}$,
which is a solution of \eqref{eq-MEq-v}-\eqref{eq-MBoundary-v} for $\Omega=B_1(0)\backslash \{0\}$.
\end{example}

\begin{example}\label{exa-disc-minus-point}
Consider $\Omega=B_1(0)\backslash \{z_0\}$ for some $z_0\in B_1(0)$.
Note that $\frac{-z+z_0}{1-\bar{z}_0z}$ is a holomorphic automorphism of $B_1(0)$.
Then, $$v_{1,z_0}=-|z-z_0|\ln |\frac{-z+z_0}{1-\bar{z}_0z}|.$$
is a solution of \eqref{eq-MEq-v}-\eqref{eq-MBoundary-v} for $\Omega=B_1(0)\backslash \{z_0\}$.
\end{example}

\begin{example}\label{exa-disc-complement2}
Consider $\Omega=B_{\frac{1}{R}}(0)\backslash \overline{B_{1}(0)}$, for some $R\in (0,1)$.
By a rescaling in Example \ref{exa-annular}, the solution of \eqref{eq-MEq-v}-\eqref{eq-MBoundary-v}
for $\Omega=B_{\frac{1}{R}}(0)\backslash\overline{ B_{1}(0)}$
is given by
$$v_{1,\frac{1}{R}}=|z|\frac{1}{\pi}\ln \frac{1}{R}\sin\frac{\ln |z|}{\frac{1}{\pi}\ln \frac{1}{R}}.$$
Letting $R\rightarrow0$, $v_{1,\frac{1}{R}}\rightarrow v_{1,\infty}:=|z|\ln |z|$,
which is a solution of \eqref{eq-MEq-v}-\eqref{eq-MBoundary-v} for $\Omega=\mathbb R^2\setminus\overline{ B_1(0)}$.
Recall that $v_{1-}(z) =\frac{1}{2}(|z|^2-1)$ is also a solution of \eqref{eq-MEq-v}-\eqref{eq-MBoundary-v}
for $\Omega=\mathbb R^2\setminus\overline{ B_1(0)}$, as in \eqref{eq-solution-outside} for $R=1$.
These are two different solutions  of \eqref{eq-MEq-v}-\eqref{eq-MBoundary-v} for $\Omega=\mathbb R^2\setminus\overline{ B_1(0)}$.
We note that $v_{1-}(\frac{1}{z})|z|^2$ is
 a solution of \eqref{eq-MEq-v}-\eqref{eq-MBoundary-v} in $B_1(0)$, but $v_{1,\infty}(\frac{1}{z})|z|^2$ is
a solution of \eqref{eq-MEq-v}-\eqref{eq-MBoundary-v} in $\Omega=B_1(0)\backslash \{0\}$.
\end{example}

Let $\Omega$ be a bounded $C^{3,\alpha}$ domain in $\mathbb R^2$, for some $\alpha\in(0,1)$.
Let $v$ be the solution of \eqref{eq-MEq-v}-\eqref{eq-MBoundary-v} in $\Omega$
and $c_3$ be the coefficient of the first global term for $v$.
Then, $v$ has the following expansion:
\begin{equation}\label{boundary expansion v}
v=d-\frac{1}{2}\kappa_{\partial\Omega}d^2+c_3d^3+O(d^{3+\alpha}).\end{equation}
We now derive an important formula for the boundary integral of $c_3$.

\begin{lemma}\label{lemma-integral-c3}
Let $\Omega\subseteq\mathbb R^2$ be a bounded $C^{3,\alpha}$ domain and
let $v$
be the solution of \eqref{eq-MEq-v}-\eqref{eq-MBoundary-v}
in $\Omega$. Let $c_3$ be the the coefficient of the first global term for $v$.
Then,
$$\int_{\partial \Omega}c_3 dl=-\int_{\Omega}\frac{(v_{xx}-v_{yy})^2+4v_{xy}^2}{6v}dx\wedge dy.$$
\end{lemma}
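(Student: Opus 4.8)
The plan is to integrate the differential equation \eqref{eq-MEq-v} against a suitable test function and identify the boundary contribution with $\int_{\partial\Omega}c_3\,dl$. Rewrite \eqref{eq-MEq-v} as $v\Delta v - |\nabla v|^2 + 1 = 0$ and observe that $v\Delta v - |\nabla v|^2 = v^2\,\Delta(\log v)$ away from $\partial\Omega$, and also $v\Delta v - |\nabla v|^2 = \tfrac12\Delta(v^2) - 2|\nabla v|^2$. The natural quantity to integrate is $(v\Delta v - |\nabla v|^2 + 1)/v$; since this vanishes identically, $\int_\Omega (v\Delta v - |\nabla v|^2 + 1)/v \, dx\wedge dy = 0$, but each piece individually is only conditionally convergent near $\partial\Omega$, so the real content is a careful limiting argument on the domain $\Omega_\varepsilon = \{v > \varepsilon\}$.

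First I would set up the regularized region $\Omega_\varepsilon = \{x\in\Omega : v(x) > \varepsilon\}$; since $v$ is a defining function with $|\nabla v| = 1$ on $\partial\Omega$, for small $\varepsilon$ the level set $\{v=\varepsilon\}$ is a smooth curve close to $\partial\Omega$, and $\partial\Omega_\varepsilon$ has $k$ components each $C^{2,\alpha}$-close to the corresponding $\sigma_i$. On $\Omega_\varepsilon$, $v$ is bounded below, so all integrations by parts are legitimate. The identity to exploit is
\begin{equation*}
\frac{1}{v}\big(v\Delta v - |\nabla v|^2 + 1\big) = 0,
\end{equation*}
which I would rearrange, using $\operatorname{div}(\nabla v) = \Delta v$ and $\operatorname{div}(v^{-1}\nabla v) = v^{-1}\Delta v - v^{-2}|\nabla v|^2$, into a divergence plus a quadratic remainder. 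Concretely, a short computation gives
\begin{equation*}
\Delta v - \operatorname{div}\!\Big(\frac{\nabla v}{v}\cdot v\Big) \ \text{type manipulations}
\end{equation*}
— more precisely I expect to land on an identity of the shape $\Delta v + v^{-1} = \operatorname{div}(\text{something}) + v^{-1}\big((\Delta v)^2/2 - \ldots\big)$, and the key algebraic point is the Bochner-type rearrangement
\begin{equation*}
v\Delta v - |\nabla v|^2 + 1 = 0 \ \Longrightarrow\
\frac{(\Delta v)^2}{2} = \frac{(v_{xx}-v_{yy})^2 + 4v_{xy}^2}{2} + (\text{divergence terms}),
\end{equation*}
since $(\Delta v)^2 = (v_{xx}+v_{yy})^2$ and $(v_{xx}+v_{yy})^2 - (v_{xx}-v_{yy})^2 - 4v_{xy}^2 = 4(v_{xx}v_{yy}-v_{xy}^2) = 4\det(\nabla^2 v)$, which is a null Lagrangian (a pure divergence, namely $2\,\operatorname{div}(v_x\nabla v_y - v_x \partial_y \nabla(\cdot))$, up to the standard Monge–Ampère divergence structure). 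Combining these, $\int_{\Omega_\varepsilon} v^{-1}\big((v_{xx}-v_{yy})^2+4v_{xy}^2\big)$ equals a sum of boundary integrals over $\partial\Omega_\varepsilon$.

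Next I would evaluate those boundary integrals on $\{v=\varepsilon\}$ using the expansion \eqref{boundary expansion v}: with $d = v + \tfrac12\kappa v^2 + O(v^3)$ inverting \eqref{boundary expansion v}, one has on $\{v=\varepsilon\}$ explicit expansions for $\nabla v$, $\Delta v$, and the arclength element in powers of $\varepsilon$. The divergence terms contribute boundary integrals whose $\varepsilon^{-1}$ and $\varepsilon^0\log$ pieces must cancel (reflecting convergence of the left side as $\varepsilon\to 0$), and the surviving finite part is precisely $\int_{\partial\Omega}c_3\,dl$ up to the constant $-1/6$ (times $6$): the factor $6$ in the statement is the combinatorial residue of $d^3$ appearing in \eqref{boundary expansion v} against the volume form's expansion. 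The main obstacle is this bookkeeping of the boundary asymptotics — tracking every term to order $\varepsilon^0$, showing the divergent parts cancel, and pinning down the exact numerical coefficient $6$; the Monge–Ampère null-Lagrangian identity and the limiting argument are routine once set up, but extracting the constant requires the full third-order expansion \eqref{boundary expansion v} and care with signs of the outward normal (pointing toward decreasing $v$).
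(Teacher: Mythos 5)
Your proposal circles the right objects (the quantity $(v_{xx}-v_{yy})^2+4v_{xy}^2 = 2|\nabla^2 v|^2-(\Delta v)^2$, and reading the constant $6$ off the expansion \eqref{boundary expansion v}), but the decisive identity is missing, and the route you sketch will not produce it. The paper does not integrate the equation divided by $v$; it applies the Laplacian to \eqref{eq-MEq-v}. Expanding $\Delta(v\Delta v)=(\Delta v)^2+2\nabla v\cdot\nabla\Delta v+v\Delta(\Delta v)$ and using Bochner's identity $\Delta|\nabla v|^2=2|\nabla^2v|^2+2\nabla v\cdot\nabla\Delta v$, the first-order terms cancel and one obtains the pointwise identity
$$\Delta(\Delta v)=\frac{2|\nabla^2 v|^2-(\Delta v)^2}{v}=\frac{(v_{xx}-v_{yy})^2+4v_{xy}^2}{v}.$$
This exhibits the weighted quadratic expression as a pure double divergence, so a single application of the divergence theorem gives $\int_\Omega v^{-1}\big((v_{xx}-v_{yy})^2+4v_{xy}^2\big)dx\wedge dy=\int_{\partial\Omega}\partial_n(\Delta v)\,dl$, and since $\Delta v=-2\kappa+6c_3d+o(d)$ by \eqref{boundary expansion v}, the boundary integrand converges directly to $-6c_3$ (the outer normal being $-\partial_d$), with no divergent terms to cancel.

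By contrast, your ``key algebraic point'' is the unweighted null-Lagrangian identity $(\Delta v)^2-(v_{xx}-v_{yy})^2-4v_{xy}^2=4\det\nabla^2 v$. This is a pointwise identity valid for arbitrary functions, not a consequence of the PDE as your implication arrow suggests, and it does not survive division by $v$: neither $(\Delta v)^2/v$ nor $(\det\nabla^2 v)/v$ is a divergence, so the hoped-for decomposition of $v^{-1}\big(v\Delta v-|\nabla v|^2+1\big)$ into ``divergence plus quadratic remainder'' does not exist in the form you describe. Integrating the equation divided by $v$ only yields first-order boundary data (essentially $\partial_n v$ and $\kappa$) and cannot isolate the third-order coefficient $c_3$; one genuinely needs to differentiate the equation twice, which is exactly what applying $\Delta$ accomplishes. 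Once the identity above is in hand, your $\Omega_\varepsilon=\{v>\varepsilon\}$ regularization and the expansion-based evaluation of the boundary term are unobjectionable, and the $\varepsilon^{-1}$ and logarithmic divergences you anticipate simply do not appear.
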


\begin{proof} We follow \cite{HanShen}. By \eqref{boundary expansion v}, we have
$$\Delta v=-2\kappa+6c_3d+o(d).$$
By applying  the Laplacian operator to \eqref{eq-MEq-v}, we get
$$\Delta (\Delta v)=\frac{1}{v}( 2|\nabla^2 v|^2-(\Delta v)^2)=\frac{(v_{xx}-v_{yy})^2+4v_{xy}^2}{v}.$$
Therefore,
\begin{align*}\int_{\partial \Omega}c_3 dl&=\frac{1}{6}\int_{\partial \Omega}\frac{\partial}{\partial d}(\Delta v)dl
=-\frac{1}{6}\int_{\partial \Omega}\frac{\partial}{\partial n}(\Delta v)dl
=-\int_{\Omega}\frac{1}{6} \Delta (\Delta v)dx\wedge dy\\
&=-\int_{\Omega}\frac{1}{6v} (2|\nabla^2 v|^2-(\Delta v)^2)dx\wedge dy\\
&=-\int_{\Omega}\frac{(v_{xx}-v_{yy})^2+4v_{xy}^2}{6v}dx\wedge dy,
\end{align*}
where $n$ is the unit outer normal vector of $\partial\Omega$.
\end{proof}

Let $ \Omega_1$ and $\Omega_2$ be two bounded $C^{3,\alpha}$ domains in $\mathbb R^2$,
for some $\alpha\in(0,1)$, and $v_1$ and $v_2$ be the solutions of \eqref{eq-MEq-v}-\eqref{eq-MBoundary-v}
in $\Omega_1$ and $\Omega_2$, respectively. Let $c_{3}$ be the coefficient of the first global term for $v_2$.
Suppose $w=x'+iy'=f(z)$ is a one-to-one holomorphic function
from $\Omega_1$ onto $\Omega_2$.  Then,
\begin{align}\label{v-1 v-2 transform}\begin{split}
v_2(w)&=v_1(z)|f_{z}(z)|,\\
\partial_wv_2&=\partial_zv_{1}|f_{z}|^{-1}\bar{f_{z}}+\frac{1}{2}v_{1}|f_{z}|^{-1}f_{z}^{-1}\bar{f_{z}}f_{zz},\\
\partial^2_wv_2&=\partial^2_zv_{1}|f_{z}|^{-1}f_{z}^{-1}\bar{f_{z}}+\frac{1}{2}v_{1}|f_{z}|^{-1}f_{z}^{-2}\bar{f_{z}}f_{zzz}
-\frac{3}{4}v_{1}|f_{z}|^{-1}f_{z}^{-3}\bar{f_{z}}f_{zz}^2\\
&=\partial^2_zv_{1}|f_{z}|^{-1}f_{z}^{-1}\bar{f_{z}}+\frac{1}{2}v_1|f_{z}|^{-1}f_{z}^{-1}\bar{f_{z}}\mathrm{S}(f),\end{split}
\end{align}
where
\begin{equation}\label{eq-Schwarzian}\mathrm{S}(f)=\frac{f_{zzz}}{f_{z}}-\frac{3}{2} (\frac{f_{zz}}{f_{z}})^2.\end{equation}
This is the Schwarzian derivative of $f$. By Lemma \ref{lemma-integral-c3}, we have
\begin{align*}
\int_{\partial \Omega_2}c_3 dl&=-\int_{\Omega_2}\frac{ 2|\nabla_{w}^2v_2|^2-(\Delta_{w} v_2)^2}{6v_2}dx'\wedge dy'\\
&=-\int_{\Omega_2}\frac{ (v_{2x'x'}-v_{2y'y'})^2+4v_{2x'y'}^2}{6v_2}dx'\wedge dy'\\
&=-\int_{\Omega_2}
\frac{ 4(v_{2ww}+v_{2\bar{w}\bar{w}})^2-4(v_{2ww}-v_{2\bar{w}\bar{w}})^2}{6v_2}\frac{i}{2}dw\wedge d\bar{w}.
\end{align*}
By \eqref{v-1 v-2 transform}, the above integral reduces
\begin{align*}
&=-\int_{\Omega_1}\frac{ 8\big[\text{Re}\big(v_1(z)|f_{z}(z)|\big)_{ww} \big]^2
+8\big[\text{Im}\big(v_1(z)|f_{z}(z)|\big)_{ww} \big]^2}{3v_1(z)|f_{z}(z)|}\frac{i|f_{z}(z)|^2}{2}dz\wedge d\bar{z}\\
&=-\int_{\Omega_1}\frac{ 8\big|\big(v_1(z)|f_{z}(z)|\big)_{ww} \big|^2}{3v_1(z)}|f_{z}(z)|dx\wedge dy.
\end{align*}
Hence,
\begin{align}\label{global term under transform}
\int_{\partial \Omega_2}c_3 dl
=-\int_{\Omega_1}\frac{ 8\big|\big(v_1(z)|f_{z}(z)|\big)_{ww} \big|^2}{3v_1(z)}|f_{z}(z)|dx\wedge dy.
\end{align}

\begin{corollary}
Let $\Omega\subseteq\mathbb R^2$ be a simply connected bounded $C^{3,\alpha}$ domain and
let $v$
be the solution of \eqref{eq-MEq-v}-\eqref{eq-MBoundary-v}
in $\Omega$. Let $c_3$ be the coefficient of the first global term for $v$.
Suppose $w=f(z)$ is a one-to-one holomorphic function
from $B_1(0)$ to $\Omega$. Then,
$$\int_{\partial \Omega}c_3 dl=-\int_{B_1(0)}\frac{(1-|z|^{2})|\mathrm{S}(f) |^2}{3|f_{z}|}dx\wedge dy.$$
where $\mathrm{S}(f)$ is the Schwarzian derivative of $f$ given by \eqref{eq-Schwarzian}.
\end{corollary}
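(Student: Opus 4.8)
The plan is to obtain this corollary as the special case of the transformation identity \eqref{global term under transform} in which $\Omega_1=B_1(0)$, $\Omega_2=\Omega$, and $f\colon B_1(0)\to\Omega$ is the given conformal map (whose existence is the Riemann mapping theorem). For this one needs the explicit solution of \eqref{eq-MEq-v}-\eqref{eq-MBoundary-v} on the unit disc, which by Example \ref{exa-disc} with $R=1$ is
\[
v_1(z)=\frac{1-|z|^2}{2}=\frac{1-z\bar z}{2}.
\]

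The key simplification is that this particular $v_1$ is affine in $z$ once $\bar z$ is frozen, so $\partial_z^2 v_1=0$. Substituting this into the formula for $\partial_w^2 v_2$ in \eqref{v-1 v-2 transform}, the term carrying $\partial_z^2 v_1$ disappears and only the Schwarzian term survives, giving
\[
\big(v_1(z)|f_z(z)|\big)_{ww}=\partial_w^2 v_2=\tfrac12\,v_1(z)\,|f_z|^{-1}f_z^{-1}\bar{f_z}\,\mathrm{S}(f).
\]
Taking absolute values and using $|f_z^{-1}\bar{f_z}|=1$ then yields $\big|\big(v_1|f_z|\big)_{ww}\big|^2=\tfrac14\,v_1(z)^2\,|f_z|^{-2}\,|\mathrm{S}(f)|^2$.

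It then remains to insert this into \eqref{global term under transform}: the numerical factor $8\cdot\tfrac14/3=\tfrac23$ appears, one power of $v_1$ cancels the $v_1$ in the denominator, two of the three powers of $|f_z|$ cancel, and using $2v_1(z)=1-|z|^2$ one arrives at
\[
\int_{\partial\Omega}c_3\,dl=-\int_{B_1(0)}\frac{(1-|z|^2)\,|\mathrm{S}(f)|^2}{3|f_z|}\,dx\wedge dy,
\]
which is the assertion. I expect no real obstacle here: the computation is routine, and the only thing that genuinely has to be checked is the vanishing of $\partial_z^2 v_1$ for the disc solution (which is precisely what makes the Schwarzian derivative the sole contributor), together with the bookkeeping of the powers of $|f_z|$ and of the area-form factor $\tfrac{i}{2}\,dw\wedge d\bar w=dx'\wedge dy'$ that was already carried out in deriving \eqref{global term under transform}.
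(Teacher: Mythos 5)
Your proposal is correct and is essentially identical to the paper's own proof: the paper also specializes \eqref{global term under transform} to $\Omega_1=B_1(0)$ with $v_1=\tfrac12(1-|z|^2)$, observes that $\bigl(\tfrac{1-|z|^2}{2}\bigr)_{zz}=0$ so only the Schwarzian term in \eqref{v-1 v-2 transform} survives, and reads off the stated identity. The bookkeeping of the constants ($8\cdot\tfrac14/3=\tfrac23$, $2v_1=1-|z|^2$, and the powers of $|f_z|$) checks out.
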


\begin{proof}
Note that $\frac{1}{2}(1-|z|^{2})$ is the solution of \eqref{eq-MEq-v}-\eqref{eq-MBoundary-v}
in $B_1(0)$ and $(\frac{1-|z|^{2}}{2})_{zz}=0$. Then, by \eqref{global term under transform},
$$\int_{\partial \Omega}c_3 dl=-\int_{B_1(0)}(\frac{1-|z|^{2}}{3})|f_{z}|^{-1}|\mathrm{S}(f) |^2dx\wedge dy.$$
This is the desired result.\end{proof}

\section{The Rigidity}\label{sec-disc}

In this section, we study the sign of the integral of the coefficient of the first global term for $v$
on each boundary curve. We will prove
such an integral on each boundary curve is negative unless the underlying domain is a disc in $\mathbb R^2$.

We first prove Theorem \ref{u disc}.

\begin{proof}[Proof of Theorem \ref{u disc}]
Without loss of generality, we assume $\sigma_1$ is the boundary of the
unbounded component of $\mathbb R^2\backslash \overline{\Omega}$.
By Lemma \ref{lemma-integral-c3}, we have
$$\sum_{i=1}^{k}\int_{\sigma_i}c_3 dl=-\int_{\Omega}\frac{(v_{xx}-v_{yy})^2+4v_{xy}^2}{6v}dx\wedge dy\leq 0.$$

If $\sum_{i=1}^{k}\int_{\sigma_i}c_3 dl=0$,
we must have $$ v_{xx}=v_{yy},\, v_{xy}= 0\quad\text{in }\Omega.$$
Then, $v=a+b_1x+b_2y+c(x^2+y^2)$ for
some constants $a$, $b$, $c$. Without loss of generality, we assume $v$ takes its maximum at $0\in \Omega$. Then, $v=a+c(x^2+y^2).$ Since $\Omega$ is bounded and $v=0$ on $\sigma_1\subseteq \partial \Omega$, we have
$$a>0,\quad c<0,$$ and
$$\sigma_1=\partial B_{\sqrt{-\frac{a}{c}}}(0).$$
In other words, $\sigma_1$ encloses a disc in $\mathbb{R}^{2}$.


Let $\Omega_1$ be the bounded domain enclosed by $\sigma_1$ and $\Omega\subseteq\Omega_1$.
Let $v_{1}$ be the corresponding solution of \eqref{eq-MEq-v}-\eqref{eq-MBoundary-v} in $\Omega_1$ and
$c_{3}^{1}$ be the coefficient of the first global term for $v_{1}$.
A similar argument yields
 $\int_{\sigma_1}c_3^{1}dl\leq 0$, and
$\int_{\sigma_1}c_3^{1}dl=0$ if and only if $\Omega_1$ is a bounded disc.
By the maximum principle, we have
$v\leq v_{1}$. Therefore, we have $c_3\leq c_{3}^{1}$ on $\sigma_1$ and hence,
$$\int_{\sigma_1}c_3dl\leq 0.$$
If $\int_{\sigma_1}c_3^{1}dl<0$, then $\int_{\sigma_1}c_3dl<0$.
If  $\int_{\sigma_1}c_3^{1} dl= 0,$ then $\Omega_1$ is a disc.
Without loss of generality, we can assume $\Omega_1=B_1(0)$. If $\Omega \neq \Omega_1$,
we can find some point $z_0 \in B_1(0)$ with $z_0\in \Omega^{c}$.
Set $$v_{z_0}=-|z-z_0|\ln |\frac{-z+z_0}{1-\bar{z}_0z}|.$$ Then,
$v_{z_0}$ is a solution of \eqref{eq-MEq-v}-\eqref{eq-MBoundary-v} in $B_1(0)\backslash\{z_0\}$.
Let $c_3^{z_0}$ be the coefficient of the  first global term for $v_{z_0}$.
Then, by the maximum principle, we have
$$\int_{\sigma_1}c_3 dl \leq\int_{\sigma_1}c_3^{z_0}dl< 0.$$
 Therefore, in both cases, we have $\int_{\sigma_1}c_3 dl \leq0$,
 and the equality holds if and only if $\Omega$ is a disc.

Now we consider $i\geq2$. Let $\Omega_i$ be the bounded domain enclosed by $\sigma_i$
and set $\widehat\Omega_{i}=\mathbb R^2\setminus\overline{\Omega}_i$.
Fix a point $z_0\in\Omega_i$.
Consider the curve
$$\widetilde{\sigma}_i= \{z|z_0+\frac{z-z_0}{|z-z_0|^2} \in\sigma_i\}.$$
Let $\widetilde{\Omega}_i$ be the bounded domain enclosed by $\widetilde{\sigma}_i$ and
$\widetilde{v}_i$ be the solution of \eqref{eq-MEq-v}-\eqref{eq-MBoundary-v} in $\widetilde{\Omega}_i$.
Set
$$\widehat v_{i}=\big[\widetilde{v}_i(z_0+\frac{z-z_0}{|z-z_0|^2}) \big]|z-z_0|^2.$$
Then, $\widehat v^{i}$ is a positive solution of \eqref{eq-MEq-v} for
$\widehat\Omega_{i}$ and $\widehat v_{i}=0$ on $\sigma_i$.
Let $\widehat c_{3}^{\,i}$ be the coefficient of the first global term
for $\widehat v_{i}$. Then,
\begin{align*}&\int_{\sigma_i}\widehat c_3^{\,i} dl-\frac{1}{6}\int_{|x|=R}\frac{\partial}{\partial r}(\Delta \widehat v_{i})dl\\
&\qquad=-\int_{B_R(0)\setminus \Omega_{i}}
\frac{(\widehat v_{ixx}-\widehat v_{iyy})^2+4(\widehat v_{ixy})^2}{6\widehat v_{i}}dx \wedge dy \leq 0.\end{align*}
By a direct computation, we have
$$\lim_{R\rightarrow\infty}\int_{|x|=R}\frac{\partial}{\partial r}(\Delta \widehat v_{i})dl=0.$$
Letting $R\rightarrow\infty$, we get
$$\int_{\sigma_i}\widehat c_3^{\,i} dl=-\int_{\widehat\Omega_{i}}
\frac{(\widehat v_{ixx}-\widehat v_{iyy})^2+4(\widehat v_{ixy})^2}{6\widehat v_{i}} dx\wedge dy  \leq 0.$$
If $\int_{\sigma_i}\widehat c_3^{\,i} dl<0$, by the maximum principle, we have
$$\int_{\sigma_i}c_3 dl\leq \int_{\sigma_i}\widehat c_3^{\,i} dl<0.$$
If $\int_{\sigma_i}\widehat c_3^{\,i} dl=0$, then
$$\widehat v_{i}=a+2b_1x+2b_2y+2c(x^2+y^2).$$
Since $\widehat v_{i}>0$ for $|z|$ large and $\widehat v_{i}=0$ on  $\sigma_i$, we have
$$\frac{b_1^2}{c}+\frac{b_2^2}{c}-a>0,\quad c>0,$$ and
$$\sigma_{i}=\partial B_{\sqrt{-\frac{b_1^2+b_2^2-ac}{c^2}}}\big(-\frac{b_1}{c},-\frac{b_2}{c}\big).$$
Without loss of generality, we  assume $\sigma_{i}=\partial B_1(0)$.
Take $R$ sufficiently large such that $\Omega\subseteq B_R(0)\backslash\overline{ B_1(0)}$.
Let $v_{R}$ be the solution of \eqref{eq-MEq-v}-\eqref{eq-MBoundary-v} in $B_R(0)\backslash\overline{ B_1(0)}$ and
$c_3^{R}$ be the corresponding first global term for $v_{R}$.
By the maximum principle, we have
$$\int_{\sigma_i}c_3 dl \leq\int_{\sigma_i}c_3^{R}dl< 0.$$
In summary, for $i\geq2$, we conclude $\int_{\sigma_i}c_3 dl <0$.
\end{proof}

\section{Gap Theorems}\label{sec-Simply connected domains}

In this section, we study the integral of the coefficient of the first global term for $v$
in multiply connected domains. We will prove the normalized integral of the coefficient
of the first global term for $v$ in multiply connected domains has a negative upper bound
on each boundary curve.

Let $\Omega$ be a bounded smooth domain in $\mathbb R^2$.
Set $k\Omega=\{x|\frac{x}{k}\in \Omega\}$.
Let $v$ be the solution of \eqref{eq-MEq-v}-\eqref{eq-MBoundary-v} for $\Omega$
and let $c_3$ be the coefficient of the first global term for $v$.
Then, $v_{k}=kv(\frac{x}{k})$ is the corresponding
solution of \eqref{eq-MEq-v}-\eqref{eq-MBoundary-v} for $\Omega=k\Omega$.
Set $k\sigma_i=\{x|\frac{x}{k}\in \sigma_i\}$.
For $v_{k}$, near $\partial (k\Omega)$, we have the following expansion:
\begin{equation}\label{boundary expansion vk}v_k=d-\frac{1}{2}\kappa_{\partial (k\Omega)}d^2+c^{k\Omega}_3d^3+o(d^3).\end{equation}
Then, \begin{equation}\label{global term transform}c^{k\Omega}_3=\frac{c_3}{k^2}.\end{equation}
Hence, $ \int_{\sigma_i}dl \int_{\sigma_i}c_3dl$ is invariant under rescaling.
By Theorem \ref{u disc}, we always have
$$\int_{\sigma_i}dl \int_{\sigma_i}c_3dl\leq0.$$
The following example shows that in certain bounded convex domains,
$ -\int_{\sigma_i}dl \int_{\sigma_i}c_3dl$ may be arbitrarily large.

\begin{example}\label{global term convex domain}
For a fixed positive number $L$,
let $\Omega=\{(x,y)|-L<y<L\}$.
Then, $v=\frac{2L}{\pi}\cos\frac{\pi y}{2L}$ is a solution of \eqref{eq-MEq-v}-\eqref{eq-MBoundary-v}.
For $R>0$ sufficiently large, let $\Omega_R\subseteq \Omega\bigcap B_{2R}(0)$
be a bounded convex smooth domain coinciding  with $\Omega$ in $B_{R}$.
Let $v_{R}$ be the solution of \eqref{eq-MEq-v}-\eqref{eq-MBoundary-v} in $\Omega_R$.
Then, $v_R$ has the expansion:
\begin{equation}\label{boundary expansion vR}
v_R=d-\frac{1}{2}\kappa_{\partial \Omega_R}d^2+c^{R}_3d^3+o(d^3).\end{equation}
We will prove, as $R\rightarrow\infty$,
$$\int_{\partial \Omega_R}dl \int_{\partial \Omega_R}c_3^{R}dl\rightarrow -\infty.$$
In fact, a direct computation implies
 $$\Delta \ln\big(\frac{1}{\frac{2L}{\pi}\cos\frac{\pi y}{2L}}+\frac{2R}{R^2-|x|^2}\big)
 <\big(\frac{1}{\frac{2L}{\pi}\cos\frac{\pi y}{2L}}+\frac{2R}{R^2-|x|^2}\big)^2
 \quad\text{in }\Omega\cap B_R(0).$$
Then, by the maximum principle for the equation \eqref{eq-MainEq}, we have
 $$ -\ln\big(\frac{2L}{\pi}\cos\frac{\pi y}{2L}\big)\leq-\ln v_R\leq
 \ln\big(\frac{1}{\frac{2L}{\pi}\cos\frac{\pi y}{2L}}+\frac{2R}{R^2-|x|^2}\big)
 \quad\text{in }\Omega\cap B_R(0).$$
Hence,
 $$\frac{1}{ \frac{1}{\frac{2L}{\pi}\cos\frac{\pi y}{2L}}+\frac{2R}{R^2-|x|^2}}
 \leq v_R\leq \frac{2L}{\pi}\cos\frac{\pi y}{2L}
\quad\text{in }\Omega\cap B_R(0).$$ Then, it is easy to prove, for any $m$,
\begin{equation}v_R\rightarrow\frac{2L}{\pi}\cos\frac{\pi y}{2L}
\quad\text{in }C^{m}([-1,1]\times[-\frac{L}{2},\frac{L}{2}] )
\text{ as }R\rightarrow\infty.\end{equation}
Therefore,
\begin{align*}&\int_{\partial \Omega_R}dl \int_{\partial \Omega_R}c_3^{R}dl\\
&\qquad
<\int_{\partial \Omega_R}dl \int_{[-1,1]\times[-\frac{L}{2},\frac{L}{2}]}
\frac{(\Delta (\frac{2L}{\pi}\cos\frac{\pi y}{2L}))^2-2|\nabla^2
(\frac{2L}{\pi}\cos\frac{\pi y}{2L})|^2}{12(\frac{2L}{\pi}\cos\frac{\pi y}{2L})}\rightarrow-\infty,\end{align*}
as $R\rightarrow\infty$.
\end{example}

The following result shows that, if $\Omega$ is a $2$-connected domain,
then the normalized integral of the coefficient of the first global term for $v$
has a negative upper bound on each boundary curve.

\begin{theorem}\label{Simply connected domains}
Let $\Omega$ be a bounded $2$-connected $C^{3,\alpha}$ domain in $\mathbb R^2$, for some
$\alpha\in(0,1)$,
and $\partial \Omega=\sigma_1\cup\sigma_2$, where $\sigma_1$ and $\sigma_2$
are simple closed $C^{3,\alpha}$ curves.
Let $v$ be the solution of \eqref{eq-MEq-v}-\eqref{eq-MBoundary-v} in $\Omega$
and let $c_3$ be the coefficient of the first global term for $v$.
Then, for $i=1,2$,   $$\int_{\sigma_i}dl\int_{\sigma_i}c_3dl<-\frac{2\pi^2}{3}.$$
\end{theorem}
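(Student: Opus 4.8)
Here the quantity $\int_{\sigma_i}dl\int_{\sigma_i}c_3\,dl$ is invariant under rescaling but, as emphasised in the introduction, not under conformal maps, so one cannot transplant the problem to a round annulus outright. The plan is instead to bound $\int_{\sigma_i}c_3\,dl$ from above by comparing $v$, through the maximum principle, with the canonical (Loewner--Nirenberg) solution on a \emph{once-punctured} simply connected domain having $\sigma_i$ as part of its boundary, and then to settle that model case, where the puncture is precisely what forces the constant $-\tfrac{2\pi^2}{3}$.

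Write $\partial\Omega=\sigma_1\cup\sigma_2$ with $\sigma_1$ the outer curve, let $D_1$ be the Jordan domain it bounds and $\Omega_2$ the hole bounded by $\sigma_2$, so $\Omega=D_1\setminus\overline{\Omega_2}$ with $\overline{\Omega_2}\subset D_1$. For $\sigma_1$, pick any $p\in\Omega_2$; then $\Omega\subset D_1\setminus\{p\}$, and the maximum principle for \eqref{eq-MainEq} with \eqref{xaximum princ v}--\eqref{xaximum princ c3} gives $c_3\le c_3^{D_1\setminus\{p\}}$ on $\sigma_1=\partial D_1$, hence
$$\int_{\sigma_1}dl\int_{\sigma_1}c_3\,dl\le \int_{\sigma_1}dl\int_{\sigma_1}c_3^{D_1\setminus\{p\}}\,dl.$$
For $\sigma_2$ one uses instead $\Omega\subset\widehat\Omega_2:=\mathbb R^2\setminus\overline{\Omega_2}$, which is conformally a once-punctured disc; running the unbounded-domain maximum principle exactly as in the proof of Theorem~\ref{u disc} for $i\ge2$ yields the same reduction with $D_1\setminus\{p\}$ replaced by $\widehat\Omega_2$. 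Thus everything follows from the statement: \emph{for a bounded $C^{3,\alpha}$ Jordan domain $D$ (or the exterior of one) and an interior point $p$, the canonical solution on $D\setminus\{p\}$ satisfies $\int_{\partial D}dl\int_{\partial D}c_3^{D\setminus\{p\}}\,dl\le-\tfrac{2\pi^2}{3}$, with equality only when $D$ is a disc and $p$ its centre.} Granting this, the strict inequality \eqref{gap for muti conn dom} holds for $\sigma_1$ upon choosing $p$ different from the centre of $D_1$ (possible since $\Omega_2$ has nonempty interior); for $\sigma_2$ the only borderline configuration is $\Omega_2$ a disc, and there strictness comes from the strong maximum principle, since $\Omega$ is a proper subdomain of $\widehat\Omega_2$.

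To prove the punctured-domain estimate, let $f\colon B_1(0)\to D$ be the Riemann map with $f(0)=p$. By \eqref{slu under conf trans} and uniqueness, $v^{D\setminus\{p\}}(f(z))=v_0(z)\,|f_z(z)|$, where $v_0(z)=|z|\ln(1/|z|)$ is the solution on $B_1\setminus\{0\}$ of Example~\ref{exa-disc-minus-origin}; crucially $(v_0)_{zz}\ne0$, in contrast to the factor $\tfrac12(1-|z|^2)$ that plays this role when $D$ is itself a disc. Carrying out the change of variables behind \eqref{global term under transform} with $v_1=v_0$, and keeping a small circle about the puncture (whose boundary term absorbs the logarithmic divergence of the domain integral), gives
$$\int_{\partial D}c_3\,dl=-\frac{8}{3}\,\Big(\mathrm{reg.}\!\int_{B_1}\frac{\big|(v_0)_{zz}+\tfrac12 v_0\,\mathrm S(f)\big|^2}{v_0\,|f_z|}\,dx\wedge dy\Big),\qquad \int_{\partial D}dl=\int_0^{2\pi}|f_z(e^{i\theta})|\,d\theta.$$
When $f$ is a Möbius automorphism of $B_1$ --- i.e. $D$ a disc and $\mathrm S(f)\equiv0$ --- this integrates explicitly (in coordinates where $D=B_1(0)$) to $\int_{\partial D}dl\int_{\partial D}c_3\,dl=-\tfrac{2\pi^2(1+|p|^2)}{3(1-|p|^2)}$, which settles the disc case and locates the extremum at $p$ the centre. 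For general $D$, expanding the square produces a leading term $|(v_0)_{zz}|^2/(v_0|f_z|)$, a cross term, and the nonnegative $\tfrac14 v_0|\mathrm S(f)|^2/|f_z|$; the last contributes a nonpositive amount to $\int_{\partial D}c_3\,dl$, hence only reinforces the bound, while for the leading term one notes that $1/f_z$ is holomorphic and applies Jensen's inequality to $\log|f_z|$ on circles and on $\partial B_1$ to see that, after normalising $|f'(0)|=1$, the leading term alone already contributes $\le-\tfrac{2\pi^2}{3}$ to $\int_{\partial D}dl\int_{\partial D}c_3\,dl$.

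The remaining difficulty --- and what I expect to be the crux --- is the cross term $\mathrm{Re}\big[(v_0)_{zz}\,\overline{\mathrm S(f)}\big]$, weighted by the length factor $\int_0^{2\pi}|f_z|\,d\theta$: because the whole expression is not conformally invariant this term does not vanish, so one must show it cannot overcome the surplus of the leading term over $\tfrac{2\pi^2}{3}$ together with the Schwarzian term. I would handle this by writing $\mathrm S(f)$ and $\log f_z$ in terms of their boundary Fourier coefficients --- both being boundary values of holomorphic functions --- and estimating the cross term by Cauchy--Schwarz against the positive leading and Schwarzian contributions, so that the total never exceeds its value at the nearest Möbius configuration. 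Finally, Theorem~\ref{not Simply connected domains} follows from this $k=2$ case by capping: filling in all but one of the holes of a $k$-connected domain yields a $2$-connected domain containing $\Omega$ with the same selected boundary curve, and the maximum principle transfers the bound.
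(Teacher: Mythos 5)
Your overall strategy coincides with the paper's: compare $v$ with the pushed-forward punctured-disc solution $\big(|z|\ln\tfrac{1}{|z|}\big)|f_z|$ via the maximum principle, apply the transformation identity behind \eqref{global term under transform}, and split the resulting integrand into the leading term $\tfrac{1}{6}\tfrac{|\ln|z||}{|z|^3|f_z|}$, the nonnegative Schwarzian term, and a cross term; your treatment of the leading term (circle averages of $1/|f_z|$ bounded below by $1/|f_z(0)|$ via subharmonicity, plus the normalization) also matches. But the cross term, which you correctly single out as the crux, is left unresolved: the Fourier-coefficient/Cauchy--Schwarz plan is only sketched, and it is not clear it closes, since the cross term is a priori of the same size as the surplus you would need to absorb it. The paper disposes of it with a clean identity: from $-\tfrac12\,\mathrm{S}(f)/|f_z|=\partial_z^2(1/|f_z|)$ one gets, in polar coordinates,
\begin{equation*}
-\frac{z^2\mathrm{S}(f)+\overline{z^2\mathrm{S}(f)}}{2|f_z|}
=\Big(-\frac{\partial^2}{\partial\theta^2}+\frac{r^2}{2}\Delta-r\frac{\partial}{\partial r}\Big)\frac{1}{|f_z|},
\end{equation*}
and since $\Delta\big(\Delta\tfrac{1}{|f_z|}\big)=4|\mathrm{S}(f)|^2/|f_z|\ge0$, the circle averages of $\Delta(1/|f_z|)$ are nondecreasing in $r$, which forces the integral of the left-hand side over every circle $|z|=r$ to be nonnegative. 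Hence the cross term has a favorable sign on each circle and can simply be discarded. Without this (or an equivalent) argument your proof is incomplete at its central step.

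Two smaller gaps. First, your ``regularized integral'' over $B_1$ is not well defined as written: near the puncture the integrand blows up like $|\ln r|/r^3$, so after the area element the divergence is of order $|\ln r|/r$, not logarithmic. The paper instead integrates only over $B_1(0)\setminus\overline{B_{1/e}(0)}$ and shows the inner boundary term $\int_{f(|z|=1/e)}\partial_n\Delta_w v^+\,|dw|$ is nonnegative; the radius $1/e$ is chosen precisely because $\partial_r\big(r\ln\tfrac1r\big)$ and $\partial_r\Delta\big(r\ln\tfrac1r\big)$ both vanish there, so that the boundary term reduces to a manifestly nonnegative bulk integral and can be dropped from the upper bound. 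Second, for $\sigma_2$ the strong maximum principle gives $v<v^{\widehat\Omega_2}$ in the interior, but that does not by itself yield strict inequality of the third-order coefficients $c_3$ in the borderline case; the paper obtains strictness by comparing with the explicit solution on $B_R(0)\setminus\overline{B_1(0)}$, whose normalized boundary integral is strictly below $-2\pi^2/3$ by the expansion in Example \ref{exa-annular}. Your closed-form value $-\tfrac{2\pi^2(1+|p|^2)}{3(1-|p|^2)}$ for the off-center punctured disc is plausible but unverified and is not needed once the comparison argument is set up as above.
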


\begin{proof}
Without loss of generality, we assume $\sigma_1$ is the boundary
of the unbounded component of $\mathbb R^2\backslash\overline{ \Omega}$.
Denote by  $\Omega_i$ the bounded domain enclosed by $\sigma_i$, $i=1,2$.
Then, $\Omega_1$ and $\Omega_2$ are simply connected domains and $\Omega_2\subseteq\Omega_1$.

We first prove
$$\int_{\sigma_1}dl\int_{\sigma_1}c_3dl<-\frac{2\pi^2}{3}.$$
 Up to a Rescaling, we can assume $\int_{\sigma_1}dl=2\pi$ and we will prove
$$\int_{\sigma_1}c_3dl<-\frac{\pi}{3}.$$
 Without loss of generality, we assume $0\in \Omega_2$.

Note that $|z|\ln\frac{1}{|z|}$ is a solution of \eqref{eq-MEq-v}-\eqref{eq-MBoundary-v}
for $B_1(0)\setminus \{0 \}$.
Let $w=x'+iy'=f(z)$ be a one to one holomorphic map from $\overline{B_1(0)}$ on to $\overline{\Omega_1}$ with $f(0)=0$.
Consider
$$v^{+}(f(z))=\big(|z|\ln\frac{1}{|z|}\big)|f_{z}(z)|,$$
or
$$v^{+}(w)=\big(|f^{-1}(w)|\ln\frac{1}{|f^{-1}(w)|}\big)|f_{z}(f^{-1}(w))|.$$
Then, $v^{+}$ is a solution of \eqref{eq-MEq-v}-\eqref{eq-MBoundary-v} in $\Omega_1\backslash\{0\}$.
Let $c_3^{+}$ be the coefficient of the first global term for $v^+$ on $\sigma_1$.

By a direct computation, we have
\begin{align*}
\frac{\partial}{\partial r }\bigg(r\ln\frac{1}{r}\bigg)=-\ln r-1,\quad
\frac{\partial^2}{\partial r^2 }\bigg(r\ln\frac{1}{r}\bigg)=-\frac{1}{r},\end{align*}
and
\begin{align*} \Delta\bigg(r\ln\frac{1}{r}\bigg)=\frac{-\ln r-2}{r},\quad
\frac{\partial}{\partial r }\Delta\bigg(r\ln\frac{1}{r}\bigg)=\frac{\ln r+1}{r^2}.
\end{align*}
Then, at $r=\frac{1}{e}$, we have
$$\frac{\partial}{\partial r }\bigg(r\ln\frac{1}{r}\bigg)=0,\quad
\frac{\partial}{\partial r }\Delta\bigg(r\ln\frac{1}{r}\bigg)=0.$$
Let $n$ be the unit outer normal vector field on $\partial\{ f(|z|<\frac{1}{e})\}$.
Then,
\begin{align*}
\int_{f(|z|=\frac{1}{e})}\frac{\partial}{\partial n }\Delta_{w} v^{+} |dw|
 =\int_{|z|=\frac{1}{e}}
\frac{1}{|f_{z}|}\frac{\partial}{\partial r}\bigg[\frac{1}{|f_{z}|^2} \Delta_{z}  \bigg(|z|\ln\frac{1}{|z|}|f_{z}|\bigg)\bigg] |f_{z}| |dz|.
\end{align*}

For the integrand, we have on $|z|=1/e$,
\begin{align*}
&\,\frac{\partial}{\partial r}\bigg[\frac{1}{|f_{z}|^2} \Delta_{z}  \bigg(|z|\ln\frac{1}{|z|}|f_{z}|\bigg)\bigg] \\
=&
\,\frac{\partial}{\partial r}\bigg[\frac{1}{|f_{z}|}
\Delta_{z}  \bigg(|z|\ln\frac{1}{|z|}\bigg)-2\frac{\partial}{\partial r} \bigg(|z|\ln\frac{1}{|z|}\bigg)
\frac{\partial}{\partial r}\frac{1}{|f_{z}|}+ \bigg(|z|\ln\frac{1}{|z|}\bigg)\frac{|f_{zz}|^2}{|f_z|^3}
    \bigg] \\
=\,&e\frac{\partial}{\partial r}\frac{1}{|f_{z}|}+
\frac{1}{e}\frac{\partial}{\partial r} \frac{|f_{zz}|^2}{|f_z|^3}.\end{align*}
Hence,
\begin{align*}
\int_{f(|z|=\frac{1}{e})}\frac{\partial}{\partial n }\Delta_{w} v^{+} |dw| =\int_{|z|\leq\frac{1}{e}}\bigg(e\frac{|f_{zz}|^2}{|f_z|^3}+
\frac{4}{e} \frac{|\emph{S}(f)|^2}{|f_z|}
    \bigg)\frac{i}{2}dz\wedge d\bar{z}
     \geq 0,
\end{align*}
where $\emph{S}(f)$ is the Schwarzian derivative of the holomorphic
function defined by \eqref{eq-Schwarzian}.

Then, by a similar computation as in proving \eqref{global term under transform}, we have
\begin{align}
\begin{split}
&\int_{\sigma_1}c_3^{+}dl\\=&-\int_{\Omega_{1}
\backslash\overline{f( B_{\frac{1}{e}}(0))}}\frac{ 2|\nabla_{w}^2 v^{+}|^2
-(\Delta_{w} v^{+})^2}{6v^{+}}dx'\wedge dy'-\int_{f(|z|=\frac{1}{e})}\frac{\partial}{\partial n }\Delta v^{+} |dw|\\
\leq &-\int_{\Omega_{1} \backslash\overline{f( B_{\frac{1}{e}}(0))}}\frac{ 2|\nabla_{w}^2 v^{+}|^2-(\Delta_{w} v^{+})^2}{6v^{+}}dx'\wedge dy'\\
=&-\int_{B_1(0)\backslash\overline{ B_{\frac{1}{e}}(0)}}\frac{8| v^{+}_{ww}|^2}{3v^{+}}\frac{i|f_{z}|^2}{2}dz\wedge d\bar{z}\\
=& -\int_{B_1(0)\backslash\overline{ B_{\frac{1}{e}}(0)}}\frac{8\bigg|\big(|z|\ln\frac{1}{|z|}\big)_{zz}|f_{z}|^{-1}f_{z}^{-1}\bar{f_{z}}
+\frac{1}{2}|z|\ln\frac{1}{|z|}|f_{z}|^{-1}f_{z}^{-1}\bar{f_{z}}\emph{S}(f)\bigg|^2}{3|z|\ln\frac{1}{|z|}}|f_{z}|dx\wedge dy \\ =& -\int_{B_1(0)\backslash\overline{ B_{\frac{1}{e}}(0)}}\frac{8\bigg|\frac{\ln |z|}{4|z|}\frac{\bar{z}^2}{|z|^2}|f_{z}|^{-1}f_{z}^{-1}\bar{f_{z}}
+\frac{1}{2}|z|\ln\frac{1}{|z|}|f_{z}|^{-1}f_{z}^{-1}\bar{f_{z}}\emph{S}(f)\bigg|^2}{3|z|\ln\frac{1}{|z|}}|f_{z}|dx\wedge dy.\end{split}
 \end{align}
For the numerator, by writing $|c|^2=c\bar c$ and a straightforward computation, the above integral reduces to
$$
-\int_{B_1(0)\backslash\overline{ B_{\frac{1}{e}}(0)}}
\bigg[\frac{1}{6}\frac{|\ln |z||}{|z|^3|f_z|}+\frac{2|z|}{3}\ln\frac{1}{|z|}\frac{|\emph{S}(f)|^2}{|f_z|}-\frac{\ln \frac{1}{|z|}}{3|z|^3}
\frac{z^2\emph{S}(f)+ \overline{z^2\emph{S}(f)} }{|f_z|}\bigg]dx\wedge dy.$$
By \eqref{eq-Schwarzian}, we have
$$-\frac{1}{2}\frac{\emph{S}(f) }{|f_z|}=\frac{\partial^2}{\partial z^2}\frac{1}{|f_z|}.$$
Then, in polar coordinates,
 \begin{align*}&-\frac{z^2\emph{S}(f)+ \overline{z^2\emph{S}(f)} }{2|f_z|}
 = (z^2\frac{\partial^2}{\partial z^2}+\bar{z}^2\frac{\partial^2}{\partial \bar{z}^2})\frac{1}{|f_z|}\\
  =&\bigg[ (z\frac{\partial}{\partial z}-\bar{z}\frac{\partial^2}{\partial \bar{z}})^2
  +2z\bar{z}\frac{\partial}{\partial z}\frac{\partial}{\partial \bar{z} }  -(z\frac{\partial}{\partial z}+\bar{z}\frac{\partial}{\partial \bar{z}})\bigg]\frac{1}{|f_z|}\\
   =&\bigg(-\frac{\partial^2}{\partial \theta^2}+\frac{r^2}{2}\Delta -r\frac{\partial}{\partial r} \bigg )\frac{1}{|f_z|}.
 \end{align*}
Hence, for any $r\in (0,1]$, we have
 \begin{align*}
 \int_{|z|=r}-\frac{z^2\emph{S}(f)+ \overline{z^2\emph{S}(f)} }{2|f_z|}|dz|
 =  \frac{r^2}{2}\int_{|z|=r}\Delta\frac{1}{|f_z|}|dz| -r\int_{|z|\leq r}\Delta\frac{1}{|f_z|}dx\wedge dy.
 \end{align*}
Note
$$\Delta\bigg(\Delta \frac{1}{|f_z| }\bigg)=4\frac{|\emph{S}(f) |^2}{|f_z|}\geq 0.$$
Then, for any $ r'\in [0,r]$,
\begin{equation}\int_{0}^{2\pi}\Delta \frac{1}{|f_z| }(r',\theta)d\theta
=\frac{1}{r'}\int_{|z|=r'}\Delta \frac{1}{|f_z| }|dz|\leq \frac{1}{r}\int_{|z|=r}\Delta \frac{1}{|f_z| }|dz|.\end{equation}
Hence,
 \begin{align*}
 \int_{|z|=r}-\frac{z^2\emph{S}(f)+ \overline{z^2\emph{S}(f)} }{2|f_z|}|dz|
 \geq   \frac{r^2}{2}\int_{|z|=r}\Delta\frac{1}{|f_z|}|dz| -  \frac{r^2}{2}\int_{|z|=r}\Delta\frac{1}{|f_z|}|dz|
 =0
 \end{align*}
Therefore, we have
\begin{align*}\int_{\sigma_1}c_3^{+}dl \leq& -\int_{B_1(0)\backslash\overline{ B_{\frac{1}{e}}(0)}}
\bigg(\frac{1}{6}\frac{|\ln |z||}{|z|^3|f_z|}+\frac{2|z|}{3}\ln\frac{1}{|z|}\frac{|\emph{S}(f)|^2}{|f_z|} \bigg) dx\wedge dy\\
  \leq& -\int_{B_1(0)\backslash\overline{ B_{\frac{1}{e}}(0)}}\frac{1}{6}\frac{|\ln |z||}{|z|^3|f_z|} dx\wedge dy
\end{align*}
Note
$$|f_z(0)|=\bigg|\frac{1}{2\pi}\int_{|z|=1}f_{z}|dz|\bigg |\leq\frac{1}{2\pi}\int_{|z|=1}|f_{z}||dz|=1,$$
where the equality holds if and only if $f_z\equiv e^{i\varphi}$ for some constant $\varphi$; namely, $f$ is a rotation.
 Since $\Delta \frac {1}{|f_z|} \geq 0$, we have, for any $ r\in [0,1]$,
\begin{equation}\frac{1}{2\pi r}\int_{|z|=r}\frac {1}{|f_z|}|dz|\geq\frac {1}{|f_z(0)|} .\end{equation}
Then,
\begin{align*}\int_{\sigma_1}c_3^{+}dl
  \leq& -\int_{B_1(0)\backslash\overline{ B_{\frac{1}{e}}(0)}}\frac{1}{6}\frac{|\ln |z||}{|z|^3|f_z|} dx\wedge dy
  = -\int_{\frac{1}{e}}^{1}\bigg(\int_{0}^{2\pi}\frac{1}{6}\frac{|\ln r|}{r^3|f_z|}d\theta \bigg)rdr \\
 \leq & -\int_{\frac{1}{e}}^{1}\frac{\pi}{3}\frac{|\ln r|}{r^3|f_z(0)|}rdr
  =-\frac{\pi}{3|f_z(0)|}
 \leq -\frac{\pi}{3}.
\end{align*}
Therefore, we have
\begin{equation}\int_{\sigma_1}c_3^{+}dl
\leq  -\frac{\pi}{3}.\end{equation}
If $\int_{\sigma_1}c_3^{+}dl<-{\pi}/{3}$, then
$$\int_{\sigma_1}c_3dl\leq \int_{\sigma_1}c_3^{+}dl<-\frac{\pi}{3}.$$
If $\int_{\sigma_1}c_3^{+}dl=-{\pi}/{3}$, then $\sigma_1=\partial B_1(0)$ and $\Omega_{1}=B_1(0)$.
Hence, for some $\epsilon>0$ sufficiently small,
we have $\Omega\subseteq B_1(0)\backslash \overline{B_{\epsilon}(0)}.$
Let $v^{\epsilon}$
be the solution of \eqref{eq-MEq-v}-\eqref{eq-MBoundary-v}
in $B_1(0)\backslash\overline{ B_{\epsilon}(0)}$ and $c_3^{\epsilon}$ be the coefficient of the first global term for $v^{\epsilon}$.
Then, we have $$\int_{\sigma_1}c_3dl\leq \int_{\sigma_1}c_3^{\epsilon}dl<-\frac{\pi}{3}.$$
Hence, in both cases, we have
$$\int_{\sigma_1}c_3 dl<-\frac{\pi}{3}.$$
Therefore, $$\int_{\sigma_1} dl\int_{\sigma_1}c_3 dl<-\frac{2\pi^2}{3}.$$

\smallskip

We now prove
$$\int_{\sigma_2} dl\int_{\sigma_2}c_3 dl<-\frac{2\pi^2}{3}.$$ Up to a rescaling, we assume $\int_{\sigma_2}dl=2\pi$ and
proceed to prove
$$\int_{\sigma_2}c_3 dl<-\frac{\pi}{3}.$$
Set $\Omega_{2*}=\mathbb R^2\setminus\overline{\Omega}_2$.
Fix a point $z_0\in\Omega_2$, say $z_0=0$. Set
$$\widehat{\Omega}_2=\{z|\frac{1}{\bar{z}} \in\Omega_{2*} \}.$$
Let $\widetilde{f}(z)$ be a one-to-one holomorphic function
from $\overline{B_1(0)}$ onto $\overline{\widehat{\Omega_2}}$ with $\widetilde{f}(0)=0$. Then,
$$w=x'+iy'=f(z)=\frac{1}{\overline{\widetilde{f}(\frac{1}{\bar{z}})}}$$
is a one-to-one holomorphic function from $(B_1(0))^c$ onto $\overline{\Omega_{2*}}$.

Note that $v^{\infty}=|z|\ln |z|$ is a solution of \eqref{eq-MEq-v}-\eqref{eq-MBoundary-v} in $(B_1(0))^c$.
Set
$$v^-(f(z))=\big(|z|\ln |z|\big)|f_{z}(z)|,$$
or
$$v^-(w)=\big(|f^{-1}(w)|\ln|f^{-1}(w)|\big)|f_{z}(f^{-1}(w))|.$$
Then, $v^-$ is a solution of \eqref{eq-MEq-v}-\eqref{eq-MBoundary-v} in $\Omega_{z*}$.
Let $c_3^{-}$ be the coefficient of the first global term for $v^{-}$ on $\sigma_2$.

Assume
$$\widetilde{f}=a_1z+a_2z^2+a_3z^3+ O(|z|^4)\quad\text{near }0.$$
Then,
$$f=\frac{1}{\frac{a_1}{z}+\frac{a_2}{z^2}+\frac{a_3}{z^3}+O(\frac{1}{|z|^4})}=\frac{z}{a_1}-\frac{a_2}{a_1^2}+\big(\frac{a_2^2}{a_1^3}-\frac{a_3}{a_1^2}\big)\frac{1}{z}+O(\frac{1}{|z|^2})\quad\text{near } \infty.$$
Let $n$ be the unit outer normal vector field on $\partial\{ f(1<|z|<R)\}$. By a direct computation, we have
\begin{align*}&\lim_{R\rightarrow\infty}\int_{f(|z|=R)}\frac{\partial}{\partial n }\Delta_{w} v^{-} |dw|\\= &\lim_{R\rightarrow\infty}\int_{|z|=R}\frac{1}{|f_{z}|}\frac{\partial}{\partial r}\bigg[\frac{1}{|f_{z}|^2} \Delta_{z}  \bigg(|z|\ln\frac{1}{|z|}|f_{z}|\bigg)\bigg] |f_{z}| |dz|\\
=&\lim_{R\rightarrow\infty}O(\frac{\ln R}{R})=0. \end{align*}
Then, we have
 \begin{align}\begin{split}&\int_{\sigma_2}c_3^{-}dl \\
 =& \lim_{R\rightarrow\infty}\bigg[\int_{B_R(0)\backslash B_1(0)}
 \frac{8\big|\big((|z|\ln|z|)|f_{z}(z)|\big)_{ww} \big|^2|f_{z}|}{-3|z|\ln|z|}dx\wedge dy \\
&\qquad\quad +\int_{f(|z|=R)}\frac{\partial}{\partial n }\Delta_{w} v^{-}  |dw|\bigg]\\
=&-\int_{(B_1(0))^c}\frac{8\big|\big(|z|\ln|z|\big)_{zz}|f_{z}|^{-1}f_{z}^{-1}\bar{f_{z}}
+\frac{1}{2}|z|\ln|z||f_{z}|^{-1}f_{z}^{-1}\bar{f_{z}}\emph{S}(f)\big|^2}{3|z|\ln|z|}|f_{z}|dx\wedge dy \\
=&-\int_{(B_1(0))^c}\frac{8\bigg|\frac{\ln |z|}{4|z|}\frac{\bar{z}^2}{|z|^2}|f_{z}|^{-1}f_{z}^{-1}\bar{f_{z}}
+\frac{1}{2}|z|\ln|z||f_{z}|^{-1}f_{z}^{-1}\bar{f_{z}}\emph{S}(f)\bigg|^2}{3|z|\ln |z|}|f_{z}|dx\wedge dy\\
=&-\int_{(B_1(0))^c}\bigg[\frac{1}{6}\frac{\ln |z|}{|z|^3|f_z|}
+\frac{2|z|}{3}\ln|z|\frac{|\emph{S}(f)|^2}{|f_z|}-\frac{\ln |z|}{3|z|^3}\frac{z^2\emph{S}(f)+ \overline{z^2\emph{S}(f)} }{|f_z|}\bigg]
dx\wedge dy.
\end{split}
 \end{align}
In polar coordinates,
 \begin{equation*}-\frac{z^2\emph{S}(f)+ \overline{z^2\emph{S}(f)} }{2|f_z|}
 =\bigg(-\frac{\partial^2}{\partial \theta^2}+\frac{r^2}{2}\Delta -r\frac{\partial}{\partial r} \bigg )\frac{1}{|f_z|}.
 \end{equation*}
Hence, for any $ r\in (1,\infty)$,
 \begin{align*}
 &\int_{|z|=r}-\frac{z^2\emph{S}(f)+ \overline{z^2\emph{S}(f)} }{2|f_z|}|dz|\\
 =\, &  \frac{r^2}{2}\int_{|z|=r}\Delta\frac{1}{|f_z|}|dz|
 -r\bigg[\int_{1\leq|z|\leq r}\Delta\frac{1}{|f_z|}dx\wedge dy+\int_{|z|=1}\frac{\partial}{\partial r} \frac{1}{|f_z|}|dz|\bigg].
 \end{align*}
Also for any $ r\in (1,\infty)$, we have
$$\frac{d}{dr}\bigg[\frac1r\int_{|z|=r}\frac{1}{|f_z|}|dz|\bigg]
=\frac1r\bigg[\int_{1\leq|z|\leq r}\Delta\frac{1}{|f_z|}dx\wedge dy+\int_{|z|=1}\frac{\partial}{\partial r} \frac{1}{|f_z|}|dz|\bigg].  $$
Note that
$$F(r)=\int_{1\leq|z|\leq r}\Delta\frac{1}{|f_z|}dx\wedge dy+\int_{|z|=1}\frac{\partial}{\partial r} \frac{1}{|f_z|}|dz|$$
is nondecreasing and
\begin{equation}\label{limit to tilde f}
 \lim_{r\rightarrow\infty}\bigg[\frac{1}{2\pi r}\int_{|z|=r}\frac{1}{|f_z|}|dz|\bigg]
 =|\widetilde{f}_z(0)|, \end{equation}
Then, we must have
$$\int_{1\leq|z|\leq r}\Delta\frac{1}{|f_z|}dx\wedge dy+\int_{|z|=1}\frac{\partial}{\partial r} \frac{1}{|f_z|}|dz|\leq0.$$
Otherwise, there would exist $r_0$ and $\epsilon_{0}>0$ such that $F(r_0)>\epsilon_{0}>0$.
Then, for $r\geq r_0$, we have
$$\frac1r\int_{|z|=r}\frac{1}{|f_z|}|dz|
\geq  \frac{1}{r_0}\int_{|z|=r_0}\frac{1}{|f_z|}|dz|+\epsilon_{0}\ln\frac{r}{r_0}\rightarrow\infty\quad\text{as }r\rightarrow\infty,$$
which contradicts to \eqref{limit to tilde f}.
Therefore, we have, for any $ r \in(1,\infty)$,
 \begin{equation*} \int_{|z|=r}-\frac{z^2\emph{S}(f)+ \overline{z^2\emph{S}(f)} }{2|f_z|}|dz|\geq0.\end{equation*}
Similarly, we have
\begin{equation*}
\frac{d}{dr}\bigg[\frac1r\int_{|z|=r}|f_z||dz|\bigg]
=\frac1r\bigg[\int_{1\leq|z|\leq r}\Delta|f_z|dx\wedge dy+\int_{|z|=1}\frac{\partial}{\partial r} |f_z||dz|\bigg]\leq 0. \end{equation*}
Hence, for any $ r \in(1,\infty)$,
\begin{equation*}\frac{1}{2\pi r}\int_{|z|=r}|f_z||dz|\leq\frac{1}{2\pi}\int_{|z|=1}|f_z||dz|=1.\end{equation*}
Then, we have, for any $ r \in(1,\infty)$,
\begin{equation*}\frac{1}{2\pi}\int_{|z|=1}\frac{1}{|f_z|}|dz|\geq \frac{1}{2\pi r}\int_{|z|=r}\frac{1}{|f_z|}|dz|
\geq2\pi r\bigg[\int_{|z|=r}|f_z||dz|\bigg]^{-1}\geq1.\end{equation*}
Moreover,
\begin{equation}\label{fz=1}\frac{1}{r}\int_{|z|=r}\frac{1}{|f_z|}|dz|=\int_{|z|=1}\frac{1}{|f_z|}|dz|=2\pi
\quad\text{for any } r \in(1,\infty),\end{equation}
if and only if
\begin{equation*}\frac{1}{|f_z|}=|f_z|\quad\text{for any } r \in(1,\infty).\end{equation*}
This implies $f_z\equiv e^{i\varphi}$ for some constant $\varphi$.
Then, we have
\begin{align*}\int_{\sigma_2}c_3^{-}dl
  &\leq -\int_{(B_1(0))^c}\frac{1}{6}\frac{\ln |z|}{|z|^3|f_z|} dx\wedge dy
  = -\int_{1}^{\infty}\bigg(\int_{0}^{2\pi}\frac{1}{6}\frac{\ln r}{r^3|f_z|}d\theta \bigg)rdr \\
 &\leq  -\int_{1}^{\infty}\frac{\pi}{3}\frac{|\ln r|}{r^3}rdr
  =-\frac{\pi}{3}.
\end{align*}
If $\int_{\sigma_2}c_3^{-}dl<-{\pi}/{3}$, then we have
$$\int_{\sigma_2}c_3dl\leq \int_{\sigma_2}c_3^{-}dl<-\frac{\pi}{6}.$$
If $\int_{\sigma_2}c_3^{-}dl=-{\pi}/{3}$, then \eqref{fz=1} holds.
Therefore, $f=e^{i\varphi}z+b$ for some constants $\varphi$ and $b$. Without loss of generality, we assume $b=0$, then, $f$ is a rotation, $\sigma_2=\partial B_1(0)$ and $\Omega_{2*}=(B_1(0))^c$.
Then, for some $R>0$ sufficiently large,
we have $\Omega\subseteq B_R(0)\backslash \overline{B_{1}(0)}.$
Let $v^{R}$
be the solution of \eqref{eq-MEq-v}-\eqref{eq-MBoundary-v}
in $B_R(0)\backslash \overline{B_{1}(0) }$ and $c_3^{R}$ be the coefficient of the corresponding first global term for $v^{R}$.
Then, we have
$$\int_{\sigma_2}c_3dl\leq \int_{\sigma_2}c_3^{R}dl<-\frac{\pi}{3}.$$
Hence, in both cases, we have $$\int_{\sigma_2}c_3 dl<-\frac{\pi}{3}.$$
Therefore, $$\int_{\sigma_2} dl\int_{\sigma_2}c_3 dl<-\frac{2\pi^2}{3}.$$

This completes the proof.
\end{proof}

Now, we are ready to prove Theorem \ref{not Simply connected domains}.

\begin{proof}[Proof of Theorem \ref{not Simply connected domains}]
Without loss of generality, we assume $\sigma_1$ is the boundary of the unbounded component of
$\mathbb R^2\backslash\overline{ \Omega}$.
 For each $i\geq 2$, let $\Omega_i$ be the bounded domain which is enclosed by $\sigma_1$ and $\sigma_i$.
Then, $\Omega\subseteq\Omega_i$.
Let $v_i$
be the solution of \eqref{eq-MEq-v}-\eqref{eq-MBoundary-v}
in $ \Omega_i$ and $c_3^{i}$ be the coefficient of the first global term for $v_i$.
Then, by the maximum principle, we have
$$\int_{\sigma_1}c_3dl\leq \int_{\sigma_1}c_3^{i}dl,$$
and, for $i=2,...,k$,
$$\int_{\sigma_i}c_3dl\leq \int_{\sigma_i}c_3^{i}dl.$$
By Theorem \ref{Simply connected domains}, we have
$$\int_{\sigma_1}dl\int_{\sigma_1}c_3dl\leq \int_{\sigma_1}dl\int_{\sigma_1}c_3^{i}dl<-\frac{2\pi^2 }{3},$$
and, for $i=2,...,k$,
$$\int_{\sigma_i}dl\int_{\sigma_i}c_3dl\leq\int_{\sigma_i}dl \int_{\sigma_i}c_3^{i}dl<-\frac{2\pi^2 }{3}.$$
This finishes the proof.
\end{proof}

Theorem \ref{Simply connected domains rigidity} is a direct consequence of Theorem \ref{not Simply connected domains}.
We now prove Theorem \ref{k-connected domains rigidity}.

\begin{proof}[Proof of Theorem \ref{k-connected domains rigidity}]
Suppose $\partial \Omega=\bigcup_{i=1}^{k}\sigma_{i}$, where $\sigma_i$ is a simple closed $C^{3,\alpha}$ curve, $i=1,...,k$.
Assume $k\geq2$. By Theorem \ref{not Simply connected domains}, we have, for each $i$,
$$\int_{\sigma_i}c_3dl<-\frac{\frac{2\pi^2}{3}}{\int_{\sigma_i}dl}.$$
Therefore,
$$\int_{\partial \Omega}dl\int_{\partial \Omega}c_3dl
<-\bigg(\sum_{i=1}^{k}\int_{\sigma_i}dl\bigg)
\bigg(\sum_{i=1}^{k} \frac{\frac{2\pi^2}{3}}{\int_{\sigma_i}dl}\bigg)\leq -\frac{2k^2\pi^2}{3}.$$
Therefore, if
$$\frac{3}{2\pi^2}\int_{\partial \Omega}dl\int_{\partial \Omega}c_3dl\geq-l^2,$$
we must have $k<l$.
\end{proof}

To end this section, we point out that the upper bound $-{2\pi^2}/{3}$ is optimal for $2$-connected domains.

\begin{example}\label{optimal for 2 connected doamins}
Set $\Omega=B_{{1}/{R}}(0)\backslash \overline{B_{R}(0)},$ for an arbitrary $ R \in (0,1)$.
Then, the solution of \eqref{eq-MEq-v}-\eqref{eq-MBoundary-v} in $\Omega$ is given by
$$v_{R}=|z|\frac{2}{\pi}\ln\frac{1}{R}\sin\frac{\ln\frac{|z|}{R}}{\frac{2}{\pi}\ln \frac{1}{R}}.$$
Let $c_3^{R}$ be the coefficient of the first global term for $v_R$.
 Then,
 $$2\pi R\int_{\partial B_R(0)}c_3^{R}dl= 2\pi \frac{1}{R}\int_{\partial B_{{1}/{R}}(0)}c_3^{R}dl
 =-\frac{2\pi^2}{3}-\frac{2\pi^2}{3(\frac{2}{\pi}\ln\frac{1}{R})^2}\rightarrow-\frac{2\pi^2}{3}.$$
 as $R\rightarrow0$.
\end{example}

In the next example, we will show that the upper bound $-{2\pi^2}/{3}$
of the normalized integral $\int_{\sigma_i}dl\int_{\sigma_i}{c_3}dl$
is also optimal in general multiply connected domains.

\begin{example}\label{optimal for k connected doamins}
Let $\{p_1,\cdots,p_k \}$ be a collection of finitely many points in $\mathbb R^2$,
with $k\ge 1$,
and set $\Omega=B_R(0)\backslash \bigcup_{i=1}^{k}\overline{ B_r(p_i)}$,
where $R$ is sufficiently large and $r$ is sufficiently small.
Let $v$
be the solutions of \eqref{eq-MEq-v}-\eqref{eq-MBoundary-v}
in $\Omega$ and $c_3$ be the coefficient of the first global term for $v$.
Then, for a fixed small number $\epsilon$, by comparing $v$
with the solution of  \eqref{eq-MEq-v}-\eqref{eq-MBoundary-v} in $B_{\epsilon}(p_i) \backslash \overline{ B_r(p_i)}$
and with the solution in $B_R(0)\backslash \overline{B_{{1}/{\epsilon}}(0)}$, respectively, we have
 $$2\pi R\int_{\partial B_R(0)}c_3dl\rightarrow-\frac{2\pi^2}{3},$$
 and,  for $i=1,..,k$,
 $$2\pi r\int_{\partial B_r(p_i)}c_3 dl\rightarrow-\frac{2\pi^2}{3},$$
as $R\rightarrow\infty$ and $r\rightarrow0$.
\end{example}

\end{document}